\def\IR{{\Bbb R}}
\def\IC{\Bbb C}
\def\ID{{\Bbb D}}
\def\IT{{\Bbb T}}
\def\zbar{{\overline{z}}}
\newcommand{\ttt}{\mathbf T}
\newcounter{minutes}\setcounter{minutes}{\time}
\newcounter{hours}\setcounter{hours}{\time}
\newtheorem{theorem}{Theorem}
\newtheorem{lemma}{Lemma}
\newtheorem{corollary}{Corollary}
\newtheorem{proposition}{Proposition}[section]
\newtheorem{definition}{Definition}[section]
\newtheorem{remark}{Remark}[section]
\title[{Quasiconformal maps with controlled Laplacian}]{Quasiconformal maps with controlled Laplacian}
\begin{document}

\author[Kalaj]{David Kalaj}
\address{University of Montenegro, Faculty of Natural Sciences and
Mathematics, Cetinjski put b.b. 81000 Podgorica, Montenegro}
\email{davidkalaj@gmail.com}
\author[Saksman]{Eero Saksman}
\address{Department of Mathematics and Statistics, University of
Helsinki, PO~Box~68, FI-00014 Helsinki, Finland}
\email{eero.saksman@helsinki.fi}
\date{November 02, 2014}
\thanks{E.S.\  was
supported by the Finnish CoE in Analysis and Dynamics Research,
and by the Academy of Finland, projects
113826 \& 118765}


\maketitle

\makeatletter\def\thefootnote{\@arabic\c@footnote}\makeatother

\begin{abstract}
We establish that every $K$-quasiconformal mapping $w$ of the unit disk $\ID$ onto a $C^2$-Jordan domain $\Omega$ is Lipschitz provided that $\Delta w\in  L^p(\ID)$ for some $p>2$. We also prove that if in this situation    $K\to 1$ with $\|\Delta w\|_{L^p(\ID)}\to 0$, and $\Omega \to \ID$ in  $C^{1,\alpha}$-sense with $\alpha>1/2,$ then the bound for  the Lipschitz constant tends to $1$. In addition, we provide a quasiconformal  analogue of the Smirnov theorem on absolute continuity over the boundary.
\end{abstract}

\maketitle

\section{Introduction}\label{intsec}

Recall that the map $w:\ID\to \IC$ of the unit disc to the complex plane is quasiconformal  if it is a sense preserving homeomorphism that has locally $L^2$-integrable weak partial derivatives, and it satisfies for almost every $z\in \ID$ the distortion inequality $|w_\zbar| \leq k |w_z|$, where $k<1.$ In this situation we say that $w$ is $K$-quasiconformal, with $K:=(1+k)/(1-k).$ We refer to  \cite{Ahl} and \cite{AIMb} for basic  notions and results of the quasiconformal theory.
Quasiconformal self-maps of the disc, even when locally $C^2$-smooth inside $\ID$, need not to be Lipschitz. However,
in the situation where $w:\ID\to \ID$ is a quasiconformal  homeomorphism that is also harmonic, Pavlovi\' c \cite{MP} proved  that $f$ is bi-Lipschitz.
Many generalisations of this result for harmonic maps heve been proven since, we refer e.g. to \cite{mana} and \cite{kave} and the references therein.

Our paper addresses the following  problem: how much one can relax the condition of harmonicity of  the quasiconformal map  $w$, while  still  being able to deduce  the Lipschitz property of $w$ --  in this situation it is less natural to inquire $w$  to be bi-Lipschitz. Better insight to this kind of questions  ought to be useful also in applications to non-linear elasticity.
A natural measure for the deviation from harmonic functions is to consider $\|\Delta w\|_{L^p(\ID)}$  for some $p\geq 1$ and ask whether finiteness of this quantity enables one to make the desired conclusion.
Our first  result yields the following:
\begin{theorem}\label{th:first} Assume that  $g\in L^p(\ID)$ and  $p>2$.
If $w$ is a $K$-quasiconformal  solution of  $\Delta w= g,$  that maps the unit disk onto a bounded Jordan domain $\Omega\subset\IC$ with $C^{2}$-boundary, then $w$ is Lipchitz continuous. The result is sharp since it fails in general if $p=2.$
\end{theorem}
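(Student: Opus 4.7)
The plan is to decompose $w$ into a harmonic part and a Lipschitz remainder, and then exploit quasiconformality together with the smoothness of $\partial\Omega$ to show that the harmonic part is also Lipschitz. Concretely, set
$$
u(z):=\frac{1}{2\pi}\int_{\ID}\log|z-\zeta|\,g(\zeta)\,dA(\zeta),
$$
so that $\Delta u=g$ in $\ID$. Since $g\in L^p(\ID)$ with $p>2$, Calder\'on--Zygmund estimates combined with Morrey's embedding $W^{2,p}\hookrightarrow C^{1,1-2/p}$ give $u\in C^{1,1-2/p}(\IC)$; in particular, $u$ is globally Lipschitz with $\|\nabla u\|_\infty\lesssim\|g\|_{L^p(\ID)}$. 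Hence $h:=w-u$ is a bounded complex-valued harmonic function on $\ID$, and it suffices to prove that $h$ is Lipschitz on $\overline{\ID}$. Writing $h=f+\overline\varphi$ with $f,\varphi$ holomorphic in $\ID$, the Lipschitz property of $h$ is equivalent (by the maximum principle for holomorphic functions) to $f',\varphi'\in H^\infty(\ID)$, and the qc hypothesis $|w_{\zbar}|\le k|w_z|$ transfers to the pointwise bound
$$
|\varphi'(z)|\le k|f'(z)|+(1+k)\,\|\nabla u\|_\infty\qquad\text{a.e.~on }\ID,
$$
so it is enough to show $f'\in H^\infty(\ID)$; the analogous bound for $\varphi'$ then follows automatically.

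To bring the $C^2$-smoothness of $\partial\Omega$ to bear, I would post-compose with a Riemann map $\Psi:\Omega\to\ID$. By Kellogg--Warschawski, $\Psi$ extends with enough regularity up to $\partial\Omega$ that both $\Psi'$ and $\Psi''$ are (essentially) bounded, so $F:=\Psi\circ w:\ID\to\ID$ is a $K$-qc self-map of the disk with
$$
\Delta F=\Psi'(w)\,g+4\Psi''(w)\,w_z w_{\zbar}.
$$
The qc inequality $|w_zw_{\zbar}|\le k(1-k^2)^{-1}J_w$, the area bound $\int_\ID J_w\le|\Omega|$, and interior Sobolev regularity of $w$ (since $\Delta w\in L^p$ and $p>2$, Morrey gives $\nabla w\in L^\infty_{\mathrm{loc}}$ via $w\in W^{2,p}_{\mathrm{loc}}$) combine to yield $\Delta F\in L^p(\ID)$, effectively reducing the theorem to the special case $\Omega=\ID$, since $\Psi^{-1}$ is itself Lipschitz.

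The main obstacle is then the reduced problem, which I would handle by adapting Pavlovi\'c's analysis of harmonic $K$-qc self-maps of the disk. Decompose $F=\tilde u+H$ with $\tilde u$ the logarithmic potential of $\Delta F$ (Lipschitz, by the first paragraph) and $H$ harmonic. The boundary constraint $|H+\tilde u|=1$ on $\partial\ID$ yields an algebraic identity coupling the holomorphic and antiholomorphic parts of $H$, and combined with the perturbed qc inequality for $H$ this forces $\partial_\theta F(e^{i\theta})\in L^\infty(\partial\ID)$, whence $F\in C^{0,1}(\overline{\ID})$ and $w=\Psi^{-1}\circ F\in C^{0,1}(\overline{\ID})$ as required. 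The delicate point is carrying the non-harmonic perturbation $\tilde u$ through the Hilbert-transform/conjugate-function identities that underlie Pavlovi\'c's original argument: bookkeeping of the non-holomorphic error terms is the real technical heart of the proof. The restriction $p>2$ enters precisely here because Morrey's embedding provides the Lipschitz estimate on the potential $u$ only in that range; at $p=2$ the gradient $\nabla u$ lies only in $\mathrm{BMO}$, leaving room for explicit counterexamples in which $w$ fails to be Lipschitz, which establishes the sharpness claim.
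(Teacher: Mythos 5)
Your opening moves are sound: the Newtonian potential $u$ of $g$ does lie in $C^{1,1-2/p}$ by Calder\'on--Zygmund plus Morrey, and reducing to the target $\ID$ by post-composing with a map to the disc is exactly what the paper does as well. However, there are two genuine gaps, and the second one is where essentially all the work of the actual proof lives.

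First, the claim that $\Delta F\in L^p(\ID)$ after the reduction is not justified by the tools you list. Bounding $\Psi''(w)\,w_z w_{\zbar}$ in $L^p$ requires $\nabla w\in L^{2p}(\ID)$ with $2p>4$. The area identity $\int_\ID J_w=|\Omega|$ only gives $w_zw_{\zbar}\in L^1$; the higher integrability of $K$-quasiconformal maps gives $\nabla w\in L^{q_0}(\ID)$ for some $q_0=q_0(K)>2$, which for $K$ not close to $1$ is far below $4$; and interior Sobolev regularity is of no help near $\partial\ID$. So $\Delta F\in L^p$ is simply asserted, not proved. This is precisely the obstruction that drives the paper's proof: it replaces $F=\Psi\circ w$ by the scalar function $h=1-|\psi\circ w|^2$ (which vanishes on $\partial\ID$), notes that $\Delta h\lesssim |\nabla w|^2+|g|$, and then runs a bootstrap: if $\nabla w\in L^q$ with $2<q<4$, the Riesz-potential estimate (Lemma~\ref{le:interior_gradient}(ii)) on the vanishing-boundary-value function $h$ upgrades this to $\nabla w\in L^{2q/(4-q)}$, and finitely many iterations starting from $q_0>2$ land above $L^4$. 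Without some such self-improvement mechanism, your chain $\Delta F\in L^p\Rightarrow$ Lipschitz is a circular assumption. (Separately, for a domain that is only $C^2$, Kellogg--Warschawski does not in general give $\Psi''\in L^\infty$; the paper avoids Riemann maps entirely and uses a $C^2$-diffeomorphism of the plane, for which $\psi$, $D\psi$, $D^2\psi$ are all bounded by fiat.)

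Second, the concluding step --- ``adapting Pavlovi\'c's analysis,'' carrying the potential $\tilde u$ through the conjugate-function identities coming from $|H+\tilde u|=1$ on $\partial\ID$ --- is explicitly flagged by you as ``the real technical heart'' but is not carried out, so the argument is incomplete even granting the first gap. The paper's route is structurally different here and sidesteps Pavlovi\'c's boundary machinery altogether: since $h$ vanishes on $\partial\ID$, the elementary Green-potential estimate $\|\nabla h\|_{L^\infty(\ID)}\le C_r\|\Delta h\|_{L^r(\ID)}$ for $r>2$ immediately gives $\nabla h\in L^\infty$, and the quasiconformality of $w$ transfers this to $\nabla w\in L^\infty$ near $\partial\ID$ because $|\nabla h|\approx |\nabla w|$ there. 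No decomposition into holomorphic/antiholomorphic parts, no Hilbert transform identities, no boundary modulus constraint. Finally, the sharpness at $p=2$ needs an actual counterexample, not a heuristic about $\nabla u\in\mathrm{BMO}$: the paper exhibits $w_0(z)=z\log^a(e/|z|^2)$ with $a\in(0,1/2)$, which is quasiconformal, has $\Delta w_0\in L^2(\ID)$, and fails to be Lipschitz at the origin.
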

\noindent The proof is given in Section \ref{se:proof1}.

Our second  result shows that, in the setting of Theorem 1, the Lipschitz constant of a normalised map $f$ becomes arbitrarily close to 1 if the image domain $\Omega$ approaches the  unit disc in a suitably defined $C^{1,\alpha}$-sense, and  if  deviations both from conformality and harmonicity tend to zero. Below we identify $[0,2\pi)$ and the boundary of the unit disc $\IT$ in the usual way.
\begin{theorem}\label{th:second}
  Let $p>2$ and   assume that $w_n:\ID\to\Omega_n $ is a $K_n$-quasi-conformal normalised map normalised by $w(0)=0$, and with
\[
\lim_{n\to\infty} K_n=1 \quad\textrm{and}\quad  \lim_{n\to\infty} \|\Delta w_n\|_{L^p(\ID)}=0.
\]
Moreover,  assume that for each $n\geq 1$  the bounded Jordan domain $\Omega_n$ approaches the unit disc in the $C^{1,\alpha}$-bounded sense.
More precisely, this means that there is a parametrisation
\[
\partial \Omega_n=\{ f_n(\theta) \; | \; \theta \in \mathbf{T}\},
\]
where  $f_n$ satisfies for some $\alpha >1/2$
\[
\| f_n(\theta)-e^{i\theta}\|_{L^\infty(\mathbf{T})}\to 0 \quad \textrm{as}\quad n\to\infty\quad \textrm{and}\quad  \sup_{n\geq 1}\| f_n(\theta)\|_{C^{1,\alpha}(\IT)}<\infty.
\]
Then for large enough $n$ the function $w_n$ is Lipschitz, and moreover its Lipschitz constant tends to 1 as $n\to\infty:$
\begin{equation}\label{eq:limit}
\lim_{n\to\infty} \| \nabla w\|_{L^\infty(\ID)} =1.
\end{equation}
\end{theorem}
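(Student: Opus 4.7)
The plan is to decompose $w_n$ into a smooth correction plus a harmonic part and to show that the harmonic part converges in $C^1$ to a Euclidean rotation. Write $w_n=h_n+v_n$, where $v_n$ is the Green potential of $g_n:=\Delta w_n$ in $\ID$, so $v_n|_{\IT}=0$. Calder\'on--Zygmund theory together with Sobolev embedding gives
\[
\|v_n\|_{C^{1,1-2/p}(\overline{\ID})}\lesssim \|g_n\|_{L^p(\ID)}\to 0,
\]
so in particular $\|\nabla v_n\|_{L^\infty(\ID)}\to 0$. The harmonic function $h_n:=w_n-v_n$ is continuous up to $\IT$ with boundary values $b_n:=h_n|_{\IT}=w_n|_{\IT}$, and the problem \eqref{eq:limit} reduces to $\|\nabla h_n\|_\infty\to 1$. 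By the Kellogg--Schauder estimate for the Poisson extension, it is enough to prove that $b_n$ converges in $C^{1,\gamma}(\IT)$ for some $\gamma>0$ to a parametrisation of $\IT$ of the form $\theta\mapsto e^{i(\theta+\theta_0)}$.

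To identify the limit I would appeal to standard QC compactness. The sequence $w_n$ is normal (image domains uniformly bounded, $w_n(0)=0$, $K_n\to 1$), so for a subsequence $w_n\to w_\infty$ locally uniformly in $\ID$; the limit is $1$-quasiconformal, hence conformal, with $w_\infty(0)=0$ and $w_\infty(\ID)=\ID$, whence $w_\infty(z)=e^{i\theta_0}z$ for some $\theta_0$. The $C^{1,\alpha}$-smoothness of $\Omega_n$ together with uniform H\"older estimates up to the boundary for quasiconformal maps allow extending the convergence to $\overline{\ID}$. Writing $b_n=f_n\circ \psi_n$ with $\psi_n:\IT\to\IT$ the boundary correspondence (well-defined via Carath\'eodory's theorem on the Jordan domains $\Omega_n$), the uniform boundary convergence of $w_n$ forces $\psi_n(\theta)\to \theta+\theta_0$ in $L^\infty(\IT)$.

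The main obstacle is the quantitative upgrade at the boundary. The assumptions $\|f_n-e^{i\theta}\|_{L^\infty(\IT)}\to 0$ and $\sup_n \|f_n\|_{C^{1,\alpha}}<\infty$ yield, by interpolation, $f_n\to e^{i\theta}$ in $C^{1,\alpha'}(\IT)$ for every $\alpha'<\alpha$. What remains is to show that the boundary correspondence $\psi_n$ is uniformly bounded in some $C^{1,\gamma}(\IT)$ with $\gamma>0$. I would extract this from a quantitative version of the boundary analysis used to prove Theorem~\ref{th:first}: one expects $\|\psi_n\|_{C^{1,\gamma}(\IT)}$ to be controlled in terms of $p$, $\alpha$, $\sup_n\|f_n\|_{C^{1,\alpha}}$, an upper bound on $K_n$, and $\sup_n\|g_n\|_{L^p}$, with $\gamma$ depending only on $p$ and $\alpha$. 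The condition $\alpha>1/2$ is presumably where this limitation enters, ensuring enough boundary smoothness to absorb the loss in the Beurling--Ahlfors or analogous step. Once the uniform $C^{1,\gamma}$ bound is in hand, interpolating against the $L^\infty$ convergence gives $\psi_n\to(\theta\mapsto\theta+\theta_0)$ in $C^{1,\gamma'}(\IT)$ for every $\gamma'<\gamma$.

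Combining these facts, $b_n=f_n\circ\psi_n\to (\theta\mapsto e^{i(\theta+\theta_0)})$ in $C^{1,\gamma''}(\IT)$ for some $\gamma''>0$. Kellogg's theorem for harmonic extensions then yields $h_n\to e^{i\theta_0}z$ in $C^1(\overline{\ID})$, so $\|\nabla h_n\|_\infty\to 1$. Combined with $\|\nabla v_n\|_\infty\to 0$ this gives $\|\nabla w_n\|_\infty\to 1$ along the subsequence. Since the limiting numerical value is $1$ regardless of the extracted subsequence and the angle $\theta_0$, the full sequence satisfies \eqref{eq:limit}, and the Lipschitz property of $w_n$ for large $n$ follows.
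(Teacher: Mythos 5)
Your decomposition $w_n=h_n+v_n$ and the estimate $\|\nabla v_n\|_{L^\infty(\ID)}\lesssim\|\Delta w_n\|_{L^p(\ID)}\to 0$ are fine, and the subsequence/normal-family identification of the limit as a rotation is standard. But the proof has a genuine gap exactly where you flag it: you need a \emph{uniform} bound $\sup_n\|\psi_n\|_{C^{1,\gamma}(\IT)}<\infty$ for the boundary correspondence $\psi_n$, and you offer only the expectation that this ``would follow from a quantitative version of the boundary analysis used to prove Theorem~\ref{th:first}.'' It does not. Theorem~\ref{th:first} yields only that $w_n$ is Lipschitz, hence $\psi_n$ is Lipschitz; this is qualitatively weaker than $C^{1,\gamma}$, and nothing in the $\Delta w\in L^p$ hypothesis forces $w_n|_{\IT}$ to be even $C^1$. (Indeed one cannot deduce $C^1$ boundary regularity from $\Delta w\in L^p$: the Green potential contribution only gives $\nabla v_n\in C^{1-2/p}$, so $\nabla w_n$ may fail to be continuous at $\IT$ in general.) Without that uniform $C^{1,\gamma}$ control, the Schauder/Kellogg step is unavailable and the argument does not close. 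The explanation you offer for $\alpha>1/2$ is also off: in the paper the threshold comes from requiring the Poisson extension of $C^{1,\alpha}$ boundary data to have second derivatives in $L^p(\ID)$ for some $p>2$ (equivalently $p<(1-\alpha)^{-1}$, which demands $\alpha>1/2$), not from absorbing a loss in a Beurling--Ahlfors step.

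The paper sidesteps boundary regularity of $\psi_n$ entirely. For $\ID\to\ID$ maps it works with $h(z)=1-|w(z)|^2$ rather than the boundary parametrisation: $\Delta h$ is controlled via \eqref{laplace}, Lemma~\ref{le:toindentity} quantifies $\||w_z|^2+|w_{\zbar}|^2-1\|_{L^3}$, and Lemma~\ref{le:interior_gradient} then bounds $\|\nabla h-\nabla h_0\|_\infty$, which transfers to $\limsup_{|z|\to1}|\nabla w|$ by quasiconformality. For general $\Omega_n$ it conjugates $w_n$ by a near-identity diffeomorphism $\Phi_n=\Psi_n^{-1}$ built from the harmonic extension of $f_n$, checks $\|\Delta(\Phi_n\circ w_n)\|_{L^q}\to 0$ via the chain-rule identity \eqref{eq:explicite_enough} and higher integrability of quasiconformal maps, and then applies the $\ID\to\ID$ case. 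If you want to repair your proof along your own lines, you would have to first establish uniform $C^{1,\gamma}(\IT)$ bounds on $w_n|_{\IT}$; that is an interesting and nontrivial regularity question, but it is not proved here and is not a consequence of what is in the paper.
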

\noindent This result will be obtained  as a corollary of slightly more general results in Section \ref{se:proof2} below. Together, our Theorems  \ref{th:first}  and \ref{th:second} considerably  improve the main result of the first author and Pavlovi\'c from \cite{trans}, where it was instead assumed that $\Delta w\in C(\overline{\ID}).$ Other related results are contained in \cite{JAM}, we refer to \cite{aips} and references therein for other type of connections between quasiconformal and Lipschitz maps.

In order to state our last theorem, we recall the  result of V. I. Smirnov, stating that a conformal mapping of the unit disk $\mathbf{U}$ onto a Jordan domain $\Omega$ with rectifiable boundary has a absolutely continuous extension to the boundary. This implies in particular that if  $E\subset \mathbf{T}$ is a set of zero 1-dimensional Hausdorff measure then its image $f(E)$ is a set of zero 1-dimensional Hausdorff measure  in $\partial\Omega$. Further, this result has been generalized for the class of q.c. harmonic mapping by several authors (see e.g. \cite{pk,kmm}). On the other hand if we assume that $f$ is merely quasiconformal, then its boundary function need not be in general an absolutely continuous function. In Section \ref{se:proof3} we prove the following generalization of  Smirnov's theorem for quasiconformal mappings,  subject again to an size condition on their Laplacian:

\begin{theorem}\label{th:smirnov}
Let $f$ be a quasiconformal mapping of the unit disk onto a Jordan domain with rectifiable boundary. Assume that  $\Delta f$ is locally integrable and satisfies
\[
|\Delta f(z)|\leq C(1-|z|)^{-a}
\]
for some constants $a<1$, and $C<\infty$. Then  $f_{|\ttt}$ is an absolutely continuous function. The result is optimal: there is a quasiconformal self-map $f:\ID\to\ID,$ with non-absolutely continuous boundary values, and such that $f\in C^\infty (\ID)$ and with $|\Delta f(z)|\leq C(1-|z|)^{-1}$
in $\ID.$
\end{theorem}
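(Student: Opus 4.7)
The strategy is to decompose $f$ into a harmonic part plus a boundary-vanishing correction, and then reduce to the classical F.\ and M.\ Riesz theorem. Set $g:=\Delta f$, let $v(z):=\int_{\ID} G(z,\zeta)\,g(\zeta)\,dA(\zeta)$ be the Green potential of $g$, and put $u:=f-v$. Then $\Delta v=g$ with $v|_{\ttt}=0$, so $u$ is harmonic in $\ID$ and $u|_{\ttt}=f|_{\ttt}$; absolute continuity of $f|_{\ttt}$ is thus equivalent to absolute continuity of $u|_{\ttt}$.

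The first technical step is to exploit the hypothesis $|g(\zeta)|\le C(1-|\zeta|)^{-a}$, $a<1$, to derive integral estimates on the complex derivatives of $v$. The kernel bound $|\nabla_z G(z,\zeta)|\lesssim |z-\zeta|^{-1}+|1-z\bar\zeta|^{-1}$, Fubini together with $\int_0^{2\pi}|re^{i\theta}-\zeta|^{-1}\,d\theta\lesssim \log(2/(1-|\zeta|))$, and the integrability of $(1-|\zeta|)^{-a}\log(2/(1-|\zeta|))$ on $\ID$ when $a<1$, give $v\in C(\overline{\ID})$ with $v|_{\ttt}=0$ and
\[
M_0:=\sup_{0<r<1}\int_0^{2\pi}\bigl(|v_z|+|v_{\bar z}|\bigr)(re^{i\theta})\,d\theta<\infty.
\]

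Next, since $u$ is harmonic, write $u=\phi_1+\overline{\phi_2}$ with $\phi_1,\phi_2$ holomorphic on $\ID$, so that $u_z=\phi_1'$ is holomorphic and $u_{\bar z}=\overline{\phi_2'}$ is antiholomorphic. Using $f_z=\phi_1'+v_z$, $f_{\bar z}=\overline{\phi_2'}+v_{\bar z}$, the distortion $|f_{\bar z}|\le k|f_z|$, and $\partial_\theta f=ire^{i\theta}f_z-ire^{-i\theta}f_{\bar z}$, a direct calculation gives $|\phi_1'|+|\phi_2'|\le C(|\partial_\theta f|+|\nabla v|)$ pointwise on $\{|z|=r\}$, so
\[
\int_0^{2\pi}\bigl(|\phi_1'|+|\phi_2'|\bigr)(re^{i\theta})\,d\theta\le C\bigl(L(r)+M_0\bigr),
\]
where $L(r)$ is the length of $f(\{|z|=r\})$. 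The subharmonicity of $|\phi_j'|$ makes the left hand side nondecreasing in $r$, so a bound on $L(r_n)$ along any sequence $r_n\nearrow 1$ suffices to place $\phi_1',\phi_2'\in H^1(\ID)$; the F.\ and M.\ Riesz theorem then yields absolute continuity of $\phi_1|_{\ttt}$ and $\phi_2|_{\ttt}$, whence of $u|_{\ttt}=f|_{\ttt}$.

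The hard part is the sequential control of $L(r)$: for a general quasiconformal map onto a rectifiable Jordan domain $\sup_r L(r)<\infty$ need not hold, so the Laplacian hypothesis must enter essentially here. The cleanest route appears to be through the Cauchy-transform decomposition $f_z=\Phi+T(g/4)$ with $\Phi$ holomorphic on $\ID$, for which Step~1 shows that $T(g/4)$ has uniformly bounded circular $L^1$-means; this reduces Hardy-class membership to $\Phi\in H^1$, which is obtained from the rectifiability of $\partial\Omega$ and a subharmonic-majorant argument applied to $|\Phi|\le |f_z|+|T(g/4)|$ along a carefully chosen sequence of radii. For the optimality assertion, one constructs a smooth quasiconformal self-map of $\ID$ whose boundary trace is a singular quasisymmetric self-homeomorphism of $\ttt$ and whose Laplacian is $O((1-|z|)^{-1})$: take the Beurling--Ahlfors extension of a singular quasisymmetric homeomorphism of $\ttt$ (whose existence is classical), then mollify inside $\ID$ at a hyperbolically calibrated scale, so that the Euclidean second derivatives of the mollified map pick up exactly the borderline rate $(1-|z|)^{-1}$ while the boundary trace is preserved.
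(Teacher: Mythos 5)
Your decomposition $f = u + v$ with $u$ harmonic and $v$ the Green potential of $g=\Delta f$ is exactly the paper's starting point, and the derivative estimate for $v$ (bounded circular $L^1$-means of $\nabla v$) is fine. The gap is in how you control the harmonic part. You reduce to bounding $L(r)=\int_0^{2\pi}|\partial_\theta f(re^{i\theta})|\,d\theta$, correctly notice that $\sup_r L(r)$ need not be finite for a general quasiconformal map onto a rectifiable domain, and then propose to repair this via a Cauchy-transform decomposition and a ``subharmonic-majorant argument along a carefully chosen sequence of radii.'' That last step is not a proof; no such sequence is produced, and in fact the difficulty does not dissolve by passing to a subsequence, because $|\Phi|\le |f_z|+|T(g/4)|$ still requires a priori control of $\int|f_z|\,d\theta$ on that subsequence, which is the thing you do not have.

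What is missing is the observation that one should work with $\int_0^{2\pi}|\partial_\theta u(re^{i\theta})|\,d\theta$ rather than $\int_0^{2\pi}|\partial_\theta f(re^{i\theta})|\,d\theta$. Since $u=\mathcal P[f|_{\ttt}]$ and $f|_{\ttt}$ is a homeomorphism of $\ttt$ onto a rectifiable Jordan curve, $f|_{\ttt}$ is of bounded variation, so its distributional $\partial_\theta$-derivative is a finite measure; $\partial_\theta u$ is precisely the Poisson extension of that measure, hence belongs to $h^1$ and the circular $L^1$-means $\int|\partial_\theta u(re^{i\theta})|\,d\theta$ are nondecreasing in $r$ and bounded by $|\Gamma|$. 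This is the paper's Lemma \ref{lemo}, and it is the crux: it replaces the uncontrolled quantity $L(r)$ by a quantity that is controlled for free by rectifiability. Once one has $\partial_\theta u=i(za'-\overline{zb'})\in h^1$, one solves $b' = \frac{\bar z}{z}\overline{a'}-\frac{i}{z}\overline{u_\theta}$, applies the distortion inequality to $f_{\bar z}=\overline{b'}+v_{\bar z}$ and $f_z=a'+v_z$, and obtains pointwise $|a'|\lesssim |u_\theta|+|\nabla v|$ on $\{|z|\ge 1/2\}$; integrating on circles yields $a',b'\in H^1$ and then absolute continuity of $f|_\ttt$.

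Your optimality sketch also has a gap. Taking the Beurling--Ahlfors extension of a singular \emph{quasisymmetric} circle homeomorphism and mollifying at hyperbolic scale will in general not give $|\Delta f|\lesssim (1-|z|)^{-1}$: for merely quasisymmetric (say H\"older but not Lipschitz) boundary data, mollification at scale $\eps\sim 1-|z|$ produces second derivatives of order $\eps^{\alpha-2}$, which is strictly worse than $\eps^{-1}$. The borderline rate requires the boundary map to be \emph{singular Zygmund} (whose existence is a nontrivial classical fact), so that the second symmetric difference is $O(t)$; the paper exploits this through the Fefferman--Kenig--Pipher extension with Gaussian kernels (Lemmas \ref{le:fkp} and \ref{le:aux}), for which the Zygmund bound translates directly into $|\Delta u(x+it)|\lesssim t^{-1}$. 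Without the Zygmund hypothesis your construction does not reach the claimed $(1-|z|)^{-1}$ bound.
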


\noindent Another variant of the proof of the previous theorem goes as follows:
\begin{corollary}\label{co:smirnov}
If $f$ is a quasiconformal mapping of the unit disk onto a Jordan domain with rectifiable boundary such that $\Delta f\in L^{p}(\ID)$ for some $p>1$,  then $f_{|\ttt}$ is an absolutely continuous function. The claim fails in general if $p<1.$
\end{corollary}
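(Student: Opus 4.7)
The plan is to derive Corollary \ref{co:smirnov} by observing that the $L^p$-hypothesis with $p>1$ implies the same weighted integrability of $\Delta f$ that is sufficient for the conclusion of Theorem \ref{th:smirnov}. Namely, H\"{o}lder's inequality with conjugate exponent $p'<\infty$ gives
\[
\int_\ID |\Delta f(z)|(1-|z|)\, dA(z) \leq \|\Delta f\|_{L^p(\ID)}\Big(\int_\ID (1-|z|)^{p'}dA\Big)^{1/p'}<\infty,
\]
and the same weighted integral is trivially finite under the pointwise hypothesis $|\Delta f(z)|\leq C(1-|z|)^{-a}$ of Theorem \ref{th:smirnov}, since $(1-|z|)^{1-a}$ is integrable over $\ID$ whenever $a<1$.

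My strategy is then to rerun the proof of Theorem \ref{th:smirnov} under the weaker weighted-integrability assumption in place of the pointwise bound. The natural tool is the Green-potential decomposition
\[
f(z) = h(z) + \phi(z), \qquad \phi(z):=\int_{\ID} G(z,w)\,\Delta f(w)\,dA(w),
\]
with $G$ the Green function of $\ID$ and $h$ harmonic. The standard gradient bound for $G$, integrated via Fubini, yields the uniform bound
\[
\sup_{r\in (0,1)}\int_0^{2\pi}|\nabla \phi(re^{i\theta})|\,d\theta \leq C\int_\ID |\Delta f(w)|(1-|w|)\,dA(w)<\infty,
\]
and $\phi$ vanishes continuously on $\partial\ID$, so $\phi_{|\ttt}\equiv 0$ is trivially absolutely continuous. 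The remaining task is to transfer absolute continuity to the harmonic part $h_{|\ttt}=f_{|\ttt}$.

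This last step is where I expect the main obstacle. The function $f_{|\ttt}$ is of bounded variation by rectifiability of $\partial\Omega$, and one needs to rule out a singular part in the boundary measure $df_{|\ttt}$. The required argument is a Riesz--Smirnov type one, using the uniform $L^1$-control of $\nabla\phi$ on circles together with the quasiconformal distortion of $f$ to show that any atom or singular concentration of $df_{|\ttt}$ would contradict the $L^2$-integrability of $\nabla f$ (coming from qc-ness and the finite area of $\Omega$) combined with the Poisson representation of $\partial_\theta h = \partial_\theta f - \partial_\theta \phi$.

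Finally, the sharpness at $p<1$ is inherited directly from the sharpness part of Theorem \ref{th:smirnov}: the quasiconformal self-map $f:\ID\to\ID$ constructed there satisfies $|\Delta f(z)|\leq C(1-|z|)^{-1}$, so a direct computation yields $\Delta f\in L^p(\ID)$ for every $p<1$, whereas its boundary values are not absolutely continuous.
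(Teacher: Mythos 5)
The paper's own proof of Corollary~\ref{co:smirnov} is a one-liner: it simply observes that the proof of Theorem~\ref{th:smirnov} already reduced the pointwise hypothesis $|\Delta f(z)|\leq C(1-|z|)^{-a}$, $a<1$, to the fact that $\Delta f\in L^p(\ID)$ for some $p>1$, and then used only this $L^p$-membership (via Lemma~\ref{wei}), so the argument applies verbatim. Your high-level strategy---rerun the proof of Theorem~\ref{th:smirnov} under a weaker hypothesis, and inherit the $p<1$ counterexample from the sharpness part of that theorem---is in the right spirit, and the remark about the sharpness example is correct as stated.

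However, there is a genuine gap in your attempt to push the hypothesis down further to the weighted $L^1$-condition $\int_\ID |\Delta f(w)|(1-|w|)\,dA(w)<\infty$. The Fubini estimate you assert, namely
\[
\sup_{r\in(0,1)}\int_0^{2\pi}|\nabla \phi(re^{i\theta})|\,d\theta \;\leq\; C\int_\ID |\Delta f(w)|(1-|w|)\,dA(w),
\]
would require the kernel bound $\int_0^{2\pi}|\nabla_z G(re^{i\theta},w)|\,d\theta \leq C(1-|w|)$ uniformly in $r$. This is false. Writing things out, $|\nabla_z G(z,w)|=\tfrac{1}{2\pi}\,\tfrac{1-|w|^2}{|1-z\bar w|\,|z-w|}$, and for fixed $w$ with $|w|=\rho$ the angular integral over $|z|=r$ behaves like $1+\log\!\big(\tfrac{1-r\rho}{|\,r-\rho\,|}\big)$: it is unbounded as $r\to\rho$ because the circle $\{|z|=r\}$ passes arbitrarily close to the pole of the Green's function at $z=w$. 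So the weighted $L^1$-norm of $\Delta f$ alone does not yield the uniform $L^1$-bound on circles; one genuinely needs the extra integrability $p>1$ to absorb that logarithmic singularity (e.g.\ by H\"older against the $L^{p'}$-finite kernel), which is precisely what Lemma~\ref{wei} accomplishes via $W^{2,p}$-elliptic regularity and the trace theorem rather than a pointwise kernel bound.

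A secondary weakness is that the step you flag as "where I expect the main obstacle"---transferring absolute continuity from the harmonic part to $f_{|\ttt}$---is left as a sketch and the sketch points in a somewhat different direction than what actually works. The paper's argument there is not about contradicting $L^2$-integrability of $\nabla f$ or ruling out atoms directly; it decomposes $f=a+\bar b+v$ with $a,b$ analytic and $v$ the Green potential, uses the quasiconformal distortion inequality $|f_{\bar z}|\leq k|f_z|$ together with the identity $b'=\tfrac{\bar z}{z}\overline{a'}-\tfrac{i}{z}\overline{u_\theta}$ to obtain a pointwise bound $|a'|\lesssim |u_\theta|+|v_z|+|v_{\bar z}|$ near the boundary, combines Lemma~\ref{lemo} ($u_\theta\in h^1$ from rectifiability) and Lemma~\ref{wei} (uniform $L^1$-control of $\nabla v$ on circles) to conclude $a',b'\in H^1$, and hence that $\partial_\theta u$ is the Poisson integral of an $L^1$-function. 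You would need to carry out this (or an equivalent) argument explicitly for the proof to be complete.
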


 Further comments, generalizations and open questions related to the above  results are included in in Sections \ref{se:proof1}--\ref{se:proof3}.

\section{ Proof of Theorem \ref{th:first}: Lipschitz-property of qc-solutions to $\Delta f=g$}\label{se:proof1}

In what follows, we say that a bounded Jordan domain $\Omega\subset\IC$  has $C^2$-boundary if it is the image of the unit disc $\ID$ under a $C^2$-diffeomorphism of the whole complex plane onto itself. For planar Jordan domains this is well-known to be equivalent to the more standard definition, that requires the boundary to be locally isometric to the graph of a $C^2$-function on $\IR$. In what follows, $\Delta $ refers to the distributional Laplacian. We shall make use of the following well-known fact, whose proof we recall:
\begin{lemma}\label{le:interior_gradient}
 Assume that $w\in C(\overline{\ID})$ is such that $\|\Delta w\|_{L^p(\ID)}<\infty$ with $p>1$.

 \smallskip

 \noindent {\rm  (i)}\quad In case $p>2$  one has
$
 \|\nabla w\|_{L^\infty (B(0,r))}<\infty
 $
for any $r<1$.
 Moreover, if $w_{|\partial \ID}=0,$ then there is $C_p<\infty$ so that
 \[
 \|\nabla w\|_{L^\infty (\ID)}\leq C_p\|\Delta w\|_{L^p (\ID)}.
 \]

 \smallskip
  \noindent {\rm (ii)}\quad If  $w_{|\partial \ID}=0,$ and $1<p<2$, then $\|\nabla w\|_{2p/(2-p)} <\infty.$
 \end{lemma}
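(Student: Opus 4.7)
My plan is to decompose $w$ in a standard way and estimate each piece via explicit integral kernels. Specifically, let $G(z,\zeta)=\frac{1}{2\pi}\log\bigl|\frac{z-\zeta}{1-\bar\zeta z}\bigr|$ be the Green's function of $\ID$, and define
\[
v(z):=\int_{\ID} G(z,\zeta)\,\Delta w(\zeta)\,dA(\zeta), \qquad u:=w-v.
\]
Then $\Delta v=\Delta w$ in the distributional sense, $v\in C(\overline{\ID})$ with $v_{|\partial\ID}=0$, and hence $u$ is harmonic in $\ID$ and continuous up to the boundary.

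For part (i), the interior case follows because the harmonic function $u$ is smooth inside, so $\|\nabla u\|_{L^\infty(B(0,r))}<\infty$ for every $r<1$; it only remains to control $\nabla v$. A direct computation gives the kernel bound
\[
|\nabla_z G(z,\zeta)|\le \frac{C}{|z-\zeta|}
\]
uniformly on $\overline{\ID}\times\overline{\ID}$ (the Blaschke factor is smooth in $z$ for $\zeta\in\ID$, so the only singularity is the Newtonian one). Differentiating under the integral and applying Hölder with exponent $p'=p/(p-1)<2$ yields
\[
|\nabla v(z)|\le C\,\|\Delta w\|_{L^p(\ID)}\,\Bigl(\int_{\ID}|z-\zeta|^{-p'}dA(\zeta)\Bigr)^{1/p'}\le C_p\,\|\Delta w\|_{L^p(\ID)},
\]
with a bound independent of $z\in\ID$ because $p'<2$. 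In the boundary-vanishing case $w_{|\partial\ID}=0$ the harmonic function $u$ has zero boundary values, hence $u\equiv 0$ and $w=v$, which gives the desired global estimate $\|\nabla w\|_{L^\infty(\ID)}\le C_p\|\Delta w\|_{L^p(\ID)}$.

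For part (ii), when $1<p<2$ the kernel $|z-\zeta|^{-1}$ is no longer $L^{p'}$-integrable, so I cannot use Hölder directly. Instead, I would view
\[
\nabla w(z)=\nabla v(z)=\int_{\ID}\nabla_z G(z,\zeta)\,\Delta w(\zeta)\,dA(\zeta)
\]
as essentially the Riesz potential $I_1$ of order one applied to $\Delta w$ (extended by zero off $\ID$), up to a bounded smoothing correction coming from the reflected pole of $G$. The Hardy--Littlewood--Sobolev inequality then furnishes
\[
\|\nabla w\|_{L^q(\ID)}\le C\|\Delta w\|_{L^p(\ID)},\qquad \frac1q=\frac1p-\frac12,
\]
which is exactly $q=2p/(2-p)$.

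The one step that deserves a bit of care is justifying that the distributional gradient of $w=v$ (when $w_{|\partial\ID}=0$) agrees with the pointwise integral $\int\nabla_z G\cdot\Delta w$; this follows from an approximation argument in which $\Delta w$ is smoothed and the convergence in $L^p$ is passed through the bounded integral operator with kernel $\nabla_z G$. Everything else is a routine application of Hölder and Hardy--Littlewood--Sobolev, so I do not expect any genuine obstacle.
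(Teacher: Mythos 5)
Your proof follows the same route as the paper: decompose $w$ into the Green potential of $\Delta w$ plus a harmonic remainder, establish the kernel bound $|\nabla_z G(z,\zeta)|\lesssim |z-\zeta|^{-1}$, and then apply H\"older (for $p>2$) or the mapping theorem for the Riesz potential $I_1$ (for $1<p<2$). One small caveat: your parenthetical claim that the Blaschke factor is ``smooth in $z$, so the only singularity is the Newtonian one'' is not by itself a uniform statement on $\overline{\ID}\times\overline{\ID}$ --- the term $\bar\zeta/(1-\bar\zeta z)$ is not uniformly bounded there --- and the kernel bound really rests on the elementary inequality $|\zeta|\,|z-\zeta|\le |1-\bar\zeta z|$ for $z,\zeta\in\ID$, which shows the reflected pole is itself dominated by $|z-\zeta|^{-1}$.
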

\begin{proof}
By the classical representation we have for $|z|<1$
\begin{equation}\label{e:POISSON1}w(z)
=\frac{1}{2\pi}\int_0^{2\pi}P(z,e^{i\varphi})w(e^{i\varphi})d\varphi+
\int_{\mathbf{U}}G(z,\omega)\Delta w(\omega)\,dA(\omega),
\end{equation}
where $P$ stands for the Poisson kernel and $G(z,\omega):=\frac{1}{2\pi}\log\big|\frac{1-z\overline{\omega}}{z-\omega}\big|$ for the Green's function of $\ID$. We observe first that since $G$ is real-valued, $|\nabla G|=2|\partial_z G|$ so that
\begin{equation}\label{nablaGreen}
|\nabla G(z,\omega)|=\frac{1}{2\pi}\big| \frac{-\overline{\omega}}{1-z\overline{\omega}}-\frac{1}{z-\omega}\big|\leq \frac{1}{\pi |z-\omega|}.
\end{equation}
Hence an application of H\"older's inequality shows that the second term in \eqref{e:POISSON1} has uniformly bounded gradient in $\ID
$. To conclude part (i) it suffices to to observe that the first term vanishes if $w_{|\partial \ID}=0$, and in the general case case it has uniformly bounded gradient in compact subsets of $\ID$. Finally, part (ii) follows immediately from \eqref{nablaGreen} by the standard mapping properties of the Riesz potential $I_1$ with the kernel $|z-\omega|^{-1}$, see \cite{stein1}.
\end{proof}

\begin{proof}[Proof of Theorem~\ref{th:first}]  It would be natural to try to generalise the ideas in \cite{JAM} where  differential inequalities were applied while treating related problems. However, it turns out that the approach of \cite{pisa}, where  the use of distance functions was initiated, is flexible enough for further development.

\emph{In the sequel we say $a \approx b$ if there is a constant $C\ge 1$ such that $a/C \le b \le C a$; and we say $a\lesssim b$ if there is a constant $C>0$ such that $a \le C b $.}

By our assumption on the domain, we may fix a diffeomorphism
$\psi:\overline{\Omega}\to\overline{\ID}$ that is $C^2$ up to the boundary.
 Denote $H:=1-|\psi|^2$, whence $H$ is  $C^2$-smooth in $\overline{\Omega}$  and vanishes on $\partial \Omega$ with $|\nabla H|\approx 1$ in a neighborhood of $\partial\Omega.$ We may then  define $h:\ID\to[0,1] $ by setting
\[
h(z):=H\circ w(z)= 1-|\psi(w(z))|^2\quad\textrm{for }\; z\in \ID.
\]
The quasiconformality of $f$ and the behavior of $\nabla H$ near $\partial\Omega$ imply that there is $r_0\in (0,1)$ so that the weak gradients satisfy
\begin{equation}\label{grad_equiv0}
|\nabla h(x)|\approx |\nabla  w(x)| \quad\textrm{for }\; r_0\leq |x|<1.
\end{equation}
Moreover, by Lemma \ref{le:interior_gradient}(i) we have
$
|\nabla h(x)|\lesssim |\nabla  w(x)|\leq C \quad\textrm{for }\;   |x|\leq r_0.
$
It follows that for any $q\in (1,\infty]$ we have that
\begin{equation}\label{grad_equiv1}
\nabla h\in L^q(\ID) \quad\textrm {if and only if }\; \nabla w\in L^q(\ID) .
\end{equation}

A direct computation, simplified by the fact that $H$ is real valued, yields that
\begin{equation}\label{laplace}
\Delta h= \Delta (H\circ w)) =(\Delta H)(w)(|w_z|^2+|w_{\overline{w}}|^2)+2{\rm Re}\Big(4H_{zz}(w)w_zw_{\overline{z}}+H_z(w)\Delta w\Big).
\end{equation}
Especially, since $H\in C^2(\overline{D})$  we have
\begin{equation}\label{laplace2}
|\Delta h|\lesssim   |\nabla w|^2+|g|.
\end{equation}

The higher integrability of quasiconformal self-maps of $\ID$ makes sure that $\nabla (\psi\circ w)\in L^q(\ID)$ for some $q>2$, which implies that  $ \nabla w\in L^q(\ID)$. By combining this with the fact that $g\in L^p(\ID)$ with $p>2,$ we deduce that $\Delta h\in L^{r}(\ID)$ with $r=\textrm{min}(p,q/2)>1.$ This information is not enough to us in case $q\leq4$,  but we will actually show that one may improve the situation to  $q>4$ via a bootstrapping argument based on the following observation: in our situation
\begin{equation}\label{boothstrap}
\textrm{if }\; \nabla w\in L^q(\ID)\;\; \textrm{with }\; 2<q<4,\quad \textrm{then actually }\;  \nabla w\in L^{2q/(4-q)}(\ID).
\end{equation}
In order to prove \eqref{boothstrap}, assume that $\nabla w\in L^q(\ID)$ for an exponent $q\in (2,4).$ Then \eqref{laplace2} and our assumption on $g$ verify that $\Delta h\in L^{q/2}(\ID).$ Since $h$ vanishes continuously on the boundary $\partial \ID$, we may apply Lemma \ref{le:interior_gradient}(ii) to obtain that $\nabla h\in L^{2q/(4-q)}(\ID)$ which yields the claim according to \eqref{grad_equiv1}.

We then claim that in our situation one has   $ \nabla w\in L^q(\ID)$ with some exponent $q>4$. For that end, fix an exponent $q_0>2$ obtained from the higher integrability of the quasiconformal map $w$ so that  $ \nabla w\in L^{q_0}(\ID)$. By diminishing $q_0$ if needed, we may well assume that
$q_0\in (2,4)$ and
$
q_0\not\in\{{2^n}/{(2^{n-1}-1)},\; n=3,4,\ldots\}.
$
Then we may iterate \eqref{boothstrap} and deduce inductively that  $\nabla w\in L^{q_k}(\ID)$ for $k=0,1,2\ldots k_0$, where
the indexes $q_k$ satisfy the recursion $q_{k+1}=\frac{2q_k}{4-q_k}$ and $k_0$ is the first index such that $q_{k_0}>4$. Such an index exists since by induction we have the relation $(1-2/q_{k+1})=2^k(1-2/q_0),$  for $k\geq 0.$

Thus we may assume that $ \nabla w\in L^q(\ID)$ with  $q>4$. At this stage  \eqref{laplace2} shows that $\Delta h\in L^{p\wedge (q/2)}(\ID).$ As  $p\wedge (q/2)>2,$   Lemma \ref{le:interior_gradient}(ii) verifies that $\nabla h$ is bounded. Finally,  by \eqref{grad_equiv1} we have the same conclusion for $\nabla w$, and hence $w$ is Lipschitz as claimed.

In order to verify the sharpness of the result, consider the following map
\[
w_0(z)=z\log^{a}\Big(\frac{e}{|z|^2}\Big),
\]
where $a\in (0,1/2)$ is fixed.
Then $w_0$ is a self-homeomorphism of $\ID$ that is quasiconformal with  continuous Beltrami-coefficient since  we may easily compute $(w_0)_z=\log^{a-1}\big(\frac{e}{|z|^2}\big)
\log\Big(\frac{e^{1-a}}{|z|^2}\Big)$ and $(w_0)_{\overline{z}}= -a\frac{\phantom{!}z\phantom{!} }{\overline{z }}\log^{a-1}\Big(\frac{e}{|z|^2}\Big)$ so that the complex dilatation of $w_0$ satisfies
\[
|\mu_{w_0}(z)|=\big|-a\frac{\phantom{!}z\phantom{!} }{\overline{z}}\Big(\log\Big(\frac{e^{1-a}}{|z|^2}\Big)\Big)^{-1} \big|\leq\frac{a}{1-a}<1.
\]
In addition, we see that $\Delta w_0\in L^2(\ID)$ since
\begin{eqnarray*}
|\Delta w_0(z)| &=&\big|4\frac{d}{d\overline{z}}(w_0)_z(z)\big|\;=\;\Big| \frac{4a}{\overline{z}}\log^{a-2}\Big(\frac{e}{|z|^2}\Big)\Big((a-1)-\log\Big(\frac{e}{|z|^2}\Big)\Big)\Big|\\
&\lesssim &|z|^{-1}\Big(\log\big(\frac{e}{|z|^2}\big)\Big)^{a-1}.
\end{eqnarray*}
Finally, it remains to observe that   $w$ is not Lipschitz at the origin.
\end{proof}

\begin{remark}\label{re1}
{\rm
If one invokes the known sharp $L^p$-integrability results of qc-maps (due to Astala, see \cite[Thm 13.2.3 ]{AIMb}) one sees that in the above proof no iteration is needed in case  $K<2$. One should also observe that the counterexample given above in the case $p=2$ is based already on the behaviour of $w$ near origin, not to any boundary effect, so in this sense    Theorem \ref{th:first} is quite sharp.
We have not seriously pursued the optimality question related to $C^2$-regularity assumption on $\Omega$. Here we simply  observe that  an easy modification of the proof  along the lines  of Section \ref{se:proof2} yields slightly stronger result, where instead of $C^2$-condition one only assumes that $\partial \Omega$ is $C^{1,\alpha}$-smooth for any $\alpha\in (0,1).$}
\end{remark}

\begin{remark}\label{re2}
{\rm
Assume that $w:B(0,1)\to B(0,1)$ is quasiconformal where $B(0,1)\subset\IR^d$ is the $d$-dimensional unit ball, $d\geq 3$ and such that $\Delta w_k\in L^p( B(0,1))$ with $p>n$  for each component of $w$ (here $k=1,\dots,d$). Then the above proof  applies with some modifications and shows that $w$ is Lipschitz. Actually, in a recent preprint \cite{kave} Astala and Manojlovi\' c  proved that quasiconformal  harmonic gradient mapping of the unit ball $B^3$ on to itself are bi-Lipschitz. They also  provide a short new proof of the Lipschitz-property of quasiconformal harmonic maps of the unit ball onto a domain with $C^{2}$ boundary on $\IR^d$ (c.f. \cite[Theorem~C]{JAM}). The results of \cite{kave} and of the present paper were obtained independently.}
\end{remark}

\section{Proof of Theorem \ref{th:second}: quantitative bounds as $\Omega\to\ID$}\label{se:proof2}

We start with an auxiliary lemma.

\begin{lemma}\label{le:toindentity} There exists a function $\psi:(1,2)\to \IR^+$  with the following property:
If  $w:\ID\to\ID$ is a $K$-quasiconformal self-map normalised with $\psi(0)=0,$  then
\[
\| \; |w_z|^2+|w_{\overline{z}}|^2-1\|_{L^3(\ID)}\; \leq \;\psi(K).
\]
Moteover, $\lim_{K\to 1^+}\psi (K)=0.$
\end{lemma}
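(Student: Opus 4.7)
The plan is to argue by contradiction, combining a normal family argument for quasiconformal maps with Astala's higher integrability. Suppose the claim fails: there exist $K_n\to 1^+$ and $K_n$-quasiconformal self-maps $w_n$ of $\ID$ with $w_n(0)=0$ such that $\||(w_n)_z|^2+|(w_n)_{\bar z}|^2-1\|_{L^3(\ID)}\geq \delta>0$. The first task is to fix $K_0>1$ close enough to $1$ that Astala's theorem (applied to the reflections of the $w_n$ across $\partial\ID$, which are $K_n$-quasiconformal on $\oC$) provides a uniform bound $\sup_n \|\nabla w_n\|_{L^p(\ID)}<\infty$ for some fixed $p>6$.

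Next I would invoke compactness of normalised quasiconformal maps: passing to a subsequence, $w_n\to w_\infty$ locally uniformly on $\ID$, and the uniform $L^p$-bound gives weak convergence $\nabla w_n\rightharpoonup \nabla w_\infty$ in $L^p(\ID)$. The limit $w_\infty$ is $1$-quasiconformal and fixes the origin, hence $w_\infty(z)=e^{i\theta}z$ for some $\theta$; in particular $|(w_\infty)_z|^2+|(w_\infty)_{\bar z}|^2\equiv 1$ a.e.

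The crux of the proof is upgrading weak to strong $L^2$-convergence of the derivatives. For this I would exploit conservation of area: since each $w_n$ is a homeomorphism of $\ID$ onto $\ID$,
\[
\pi\;=\;\int_\ID J_{w_n}\,dA\;\leq\;\int_\ID\bigl(|(w_n)_z|^2+|(w_n)_{\bar z}|^2\bigr)\,dA\;\leq\;K_n\int_\ID J_{w_n}\,dA\;=\;K_n\pi\longrightarrow\pi.
\]
Thus the $L^2$-norms of the $w_n$-derivatives converge to the $L^2$-norm of $\nabla w_\infty$; combined with the weak $L^2$-convergence, a standard Radon--Riesz / uniform-convexity argument yields strong $L^2$-convergence, whence $|(w_n)_z|^2+|(w_n)_{\bar z}|^2\to 1$ in $L^1(\ID)$.

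Finally, the uniform $L^{p/2}$-bound with $p/2>3$ permits the interpolation inequality $\|f\|_{L^3}\leq \|f\|_{L^1}^{\theta}\|f\|_{L^{p/2}}^{1-\theta}$ with $\theta\in (0,1)$, which upgrades the $L^1$-convergence just established to $L^3$-convergence, contradicting $\delta>0$. The main technical obstacle I anticipate is the first step, namely extracting a \emph{uniform} $L^p$-bound on the derivatives over the whole disc and not merely on compact subsets of $\ID$; once this is secured, the remaining ingredients (normality, area conservation, interpolation) are essentially standard.
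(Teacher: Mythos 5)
Your argument is correct, and it takes a genuinely different route from the paper's. The paper proceeds constructively: reflect $w$ across $\partial\ID$, rotate so that $w(1)=1$, and then use quasiconformal surgery (gluing to the identity outside a large ball, at the modest cost of raising $K$ to $\frac{3K-1}{3-K}$) to reduce to a \emph{principal solution}. For principal solutions with small $\|\mu\|_\infty$ the Neumann series $w_{\bar z}=\mu+\mu T\mu+\cdots$, $w_z-1=Tw_{\bar z}$ converges in $L^6(\IC)$, giving the explicit bound $\|w_{\bar z}\|_{L^6}+\|w_z-1\|_{L^6}\lesssim k^{1/6}$ and hence a quantitative $\psi(K)$. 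Your approach is a soft compactness/contradiction argument: normal families force a subsequential locally uniform limit $w_\infty$ which must be a rotation; area conservation $\int_\ID J_{w_n}=\pi$ combined with $|w_z|^2+|w_{\bar z}|^2\leq K J_w$ pins down the $L^2$-norms; Radon--Riesz (weak $L^2$-convergence + convergence of norms) upgrades to strong $L^2$; and interpolation against a uniform $L^{p/2}$-bound with $p/2>3$ promotes $L^1$- to $L^3$-convergence. The trade-off is that the paper's method yields an explicit modulus $\psi(K)\lesssim k^{1/6}$, whereas yours only establishes existence of $\psi$ — but the lemma as stated asks for nothing more, so this is perfectly adequate. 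The obstacle you flag at the end is in fact not a problem: you do not need to reflect at all. Astala's area distortion theorem applies directly to a $K$-qc self-map $w:\ID\to\ID$ with $w(0)=0$ and gives $|w(E)|\leq M(K)|E|^{1/K}$ with $M(K)$ uniformly bounded for $K\leq K_0$; the resulting weak-type bound $|\{J_w>\lambda\}|\leq (M(K)/\lambda)^{K/(K-1)}$ yields a uniform bound on $\|J_w\|_{L^q(\ID)}$ for any $q<K/(K-1)$, and since $|Dw|^2\leq K J_w$ this gives the uniform $\|Dw\|_{L^{2q}(\ID)}$-bound over the \emph{whole} disc. Taking $K_0<3/2$ lets you choose a fixed $q>3$, i.e.\ $2q>6$, which is exactly what your interpolation step requires.
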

\begin{proof} By the sharp area distortion $\|\nabla w\|_{L^6(\ID)}<\infty$ for $K<3/2$. By reflecting $w$ over the boundary $\partial \ID$ we may also assume that $w$ extends to a $K$-quasiconformal
map (still denoted by $w$) to the whole plane.  By rotation of needed, we may also impose the condition that $w(1)=1.$ Furthermore, we may even assume that $w_{\IC\setminus B(0,e^{3\pi})}$ is the identity map, since
we may use standard quasiconformal surgery (choose $k=(K-1)/(K+1)$ and $\alpha=2k$ in  \cite[Theorem 12.7.1]{AIMb}) to produce  $\frac{3K-1}{3-K}$-quasiconformal modification (still denoted by $w$) that equals to the original function $w$ in $\ID$ and  satisfies $w(z)=z$ for $|z|\geq e^{3\pi}.$ Especially, it is a principal solution. Since $\frac{3K-1}{3-K}\to 1$ as $K\to 1,$ and we are interested only on small values of $K$, it is thus enough to prove the corresponding claim for principal solutions with complex dilatation supported in $B(0,e^{3\pi}).$

Denote by $M$ the norm of the Beurling operator on $L^6(\IC).$ Fix $R_0>0$  and consider a principal solution $w$ to the Beltrami equation $w_{\overline{z}}=\mu w_z$ with
$|\mu |\leq k<1/2M$. Then we have the standard Neumann-series representation
\[
w_{\overline{z}}=\mu+\mu T\mu+\mu T\mu T\mu+\ldots \quad\textrm{and}\quad w_z-1=Tw_{\overline{z}}.
\]
We thus obtain that
\[
\|w_{\overline{z}}\|_{L^6(\IC)}\leq \|\mu \|_{L^6(\IC)}\left(1+ \frac{M}{2M}+ \left(\frac{M}{2M}\right)^2+\ldots\right)\leq 2\|\mu \|_{L^6(\IC)}
\leq Ck^{1/6}
\]
and, a fortiori $\|w_z-1\|_{L^6(\IC)}\leq MCk^{1/6}=C'k^{1/6}.$
We obtain the desired $L^3$-estimate for $|f_{\overline{z}}|^2$  since $k\to 0$ as $K\to 1.$ The estimate for $|f_{{z}}|^2-1$ follows by
noting that $\big| |f_z|^2-1\big|\leq |f_z-1|(|f_z-1|+2)$ and applying H\"older's inequality.
\end{proof}

Before proving the more general convergence result stated in the introduction it is useful to consider first the case where the image domain is fixed, and in fact equals $\ID$.

\begin{proposition}\label{pr:first} Assume that $p>2$. There exist a function \[
[1,\infty)\times [0,\infty) \ni (K,t)\to \widetilde C_p(K,t)
\] with the property: if $w:\ID\to\ID$ is a $K$-quasiconformal self map of the unit disc, normalised by $w(0)=0$, and with $\Delta w\in L^p(\ID),$ then one has
\[
\| \nabla w\|_{L^\infty(\ID)}\leq  \widetilde C_p(K,\|\Delta w\|_p).
\]
Moreover,  the function $\widetilde C_p$ satisfies
\begin{equation}\label{eq:goal}
\lim_{K\to1^+, \; t\to 0^+} \widetilde C_p(K,t)=1.
\end{equation}
\end{proposition}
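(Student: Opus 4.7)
The plan is to exploit the auxiliary function $h(z) := 1 - |w(z)|^2$ (as in the proof of Theorem~\ref{th:first}) and to compare it \emph{quantitatively} to its reference profile $h_0(z) := 1 - |z|^2$, which would be the exact value of $h$ if $w$ were the identity. Specialising formula \eqref{laplace} to $\psi = \mathrm{id}$ and $H(\zeta) = 1 - |\zeta|^2$ yields
\[
\Delta h = -4\bigl(|w_z|^2 + |w_{\overline{z}}|^2\bigr) - 2\,{\rm Re}\bigl(\overline{w}\,\Delta w\bigr),
\]
so that $\Delta(h-h_0) = -4(|w_z|^2 + |w_{\overline{z}}|^2 - 1) - 2\,{\rm Re}(\overline{w}\,\Delta w)$. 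Setting $q := \min(p,3) > 2$, Lemma~\ref{le:toindentity} together with H\"older's inequality and $|\overline{w}| \leq 1$ yields
\[
\|\Delta(h-h_0)\|_{L^q(\ID)} \;\lesssim\; \psi(K) + \|\Delta w\|_{L^p(\ID)} \;=:\; \eta_1(K,t),
\]
which tends to $0$ under our hypotheses. Since $h - h_0$ vanishes on $\partial\ID$, Lemma~\ref{le:interior_gradient}(i) upgrades this to $\|\nabla h - \nabla h_0\|_{L^\infty(\ID)} \leq C_q\,\eta_1 =: \eta_2$, which also tends to $0$. In particular $|\nabla h(z)| \leq 2|z| + \eta_2$ and, integrating from the boundary, $\bigl||w(z)|^2 - |z|^2\bigr| \leq \eta_2$ pointwise on $\ID$.

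The chain-rule identity $h_z = -\bigl(\overline{w}\,w_z + w\,\overline{w_{\overline{z}}}\bigr)$ combined with $|w_{\overline{z}}| \leq k|w_z|$ (so $|w_z| - |w_{\overline{z}}| \geq (|w_z|+|w_{\overline{z}}|)/K$) produces the key algebraic bound
\[
|\nabla w(z)| \;\leq\; \frac{K\,|\nabla h(z)|}{2\,|w(z)|} \qquad \text{whenever }\; w(z) \neq 0,
\]
where $|\nabla w| := |w_z| + |w_{\overline{z}}|$ denotes the operator norm. The main obstacle is the blow-up of this bound as $|w|\to 0$, which forces a split of $\ID$ into two regions. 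Fix once and for all $r_0 \in (0,1)$. On the annulus $\{r_0 \leq |z| < 1\}$, the first paragraph gives $|w(z)| \geq \sqrt{r_0^2 - \eta_2}$ for $\eta_2$ small enough, hence
\[
|\nabla w(z)| \;\leq\; K\,\frac{|z| + \eta_2/2}{\sqrt{|z|^2 - \eta_2}} \;\leq\; K\,\frac{1 + \eta_2/(2r_0)}{\sqrt{1 - \eta_2/r_0^2}} \;\longrightarrow\; 1 \quad\text{as}\; K\to 1^+,\; t\to 0^+.
\]

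For the complementary disc $B(0,r_0)$, I invoke interior elliptic regularity: from $\Delta w \in L^p(\ID)$ with $p > 2$ and $|w| \leq 1$, Calder\'on--Zygmund produces uniform $W^{2,p}_{\mathrm{loc}}$-bounds, and hence uniform $C^{1,\alpha}_{\mathrm{loc}}$-bounds on $w$. Meanwhile the classical normal-families theorem for $K$-quasiconformal self-maps of $\ID$ normalised by $w(0)=0$ and with $K\to 1^+$ forces (subsequential) uniform convergence of $w$ to some rotation $w_\infty$; combined with the $C^{1,\alpha}$-bound this upgrades to $C^1(B(0,r_0))$-convergence. Since $|\nabla w_\infty|\equiv 1$, we obtain $\|\nabla w\|_{L^\infty(B(0,r_0))}\to 1$. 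Combining the two regional estimates, and noting $\|\nabla w\|_{L^\infty}\geq 1$ always (by $\int_\ID J_w = \pi$ and $J_w \leq |\nabla w|^2$), we conclude $\|\nabla w\|_{L^\infty(\ID)}\to 1$ in the limit $K\to 1^+,\; t\to 0^+$. The quantitative function $\widetilde C_p(K,t)$ with the required limit behaviour \eqref{eq:goal} then follows from this convergence, the pointwise finiteness supplied by Theorem~\ref{th:first}, and a standard contradiction/compactness packaging.
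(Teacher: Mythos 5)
Your first half mirrors the paper exactly: the auxiliary functions $h = 1-|w|^2$ and $h_0 = 1-|z|^2$, the estimate of $\|\Delta(h-h_0)\|_{L^q}$ via \eqref{laplace} and Lemma~\ref{le:toindentity}, the upgrade to $\|\nabla h - \nabla h_0\|_{L^\infty}\leq\eta_2$ through Lemma~\ref{le:interior_gradient}(i), and the pointwise bound $|\nabla w|\leq K|\nabla h|/(2|w|)$ obtained from the chain rule and the distortion inequality are all the same ingredients (the paper phrases the last one as $|\nabla h(z)|\geq K^{-1}|(\nabla h_0)(w(z))|\,|\nabla w(z)|$, which is identical). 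The genuine divergence is in how the boundary/annular estimate is propagated to the full disc. The paper writes $w = u+f$ with $u$ harmonic carrying the boundary data of $w$ and $f$ solving $\Delta f = \Delta w$ with zero boundary values, then uses the \emph{subharmonicity of $|\nabla u|=|a'|+|b'|$} (with $u=a+\overline{b}$) and the maximum principle to push the limiting boundary bound into the interior, giving the explicit closed formula $\widetilde C_p(K,t)= K+ \frac{c_pK}{2}\psi(K)+\frac{c_p(K+4)}{2}t$. You instead fix $r_0$, use the pointwise bound together with $\bigl||w|^2-|z|^2\bigr|\leq\eta_2$ to control the annulus $\{r_0\leq|z|<1\}$ quantitatively, and then handle $B(0,r_0)$ by a \emph{soft} compactness argument (interior $W^{2,p}$/$C^{1,\alpha}$ estimates plus normal families of $K$-qc self-maps fixing $0$, whose limit as $K\to 1$ is a rotation). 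This is a valid alternative, and the steps check out — including the monotonicity of $s\mapsto (s+\eta_2/2)/\sqrt{s^2-\eta_2}$ that justifies evaluating at $s=r_0$, and the subsequence principle that turns the normal-families convergence into convergence of the full sequence of norms. The trade-offs are: the paper's Poisson-decomposition route needs no compactness and produces an explicit function $\widetilde C_p(K,t)$ (for, say, $K<1+1/100$), whereas your argument only yields $\widetilde C_p$ implicitly via the indicated contradiction packaging; in exchange, your annular bound is slightly more self-contained in that it does not appeal to the subharmonicity of $|a'|+|b'|$. One small point worth making explicit in your write-up: the uniform interior $W^{2,p}$ bound indeed only depends on $\|w\|_{L^\infty(\ID)}\leq 1$ and $\|\Delta w\|_{L^p(\ID)}$, and one also needs the standard fact that a locally uniform limit of $K_n$-qc self-homeomorphisms of $\ID$ fixing $0$, with $K_n\to 1$, is a non-degenerate conformal self-map (hence a rotation), not a constant; this is where the two-sided normalisation of $w$ and $w^{-1}$ as self-maps of $\ID$ fixing $0$ is used.
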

\begin{proof}
We follow the line to the proof of Theorem \ref{th:first}, in particular we employ its notation, but this time we strive  to make the conclusion quantitative. We may well assume that $p\leq 3.$ Let us then
assume that $w$ is as in the assumption of the Proposition with $K<1+1/100,$ say.  In addition, we may freely assume that $w(1)=1.$ As the image domain is $\ID$, the function $h$  from the proof of Theorem \ref{th:first}, takes the form
\[
h(z)=1-|w(z)|^2.
\]
Let us write $h_0(z)=1-|z|^2,$ which corresponds to $h$ when  $w$ is the identity map. An application of \eqref{laplace}  and Lemma \ref{le:toindentity} allow us to estimate
\begin{eqnarray}\label{eq:laplacebound}
\|\Delta(h-h_0)\|_{L^p(\ID)} &=&\big\|4(1-|w_z|^2)-4|w_{\overline z}|^2+2{\rm Re}(\overline wg)\big\|_{L^p(\ID)}\nonumber\\
&\leq& 4\|(|w_z|^2-1)+|w_{\overline z}|^2\|_{L^3(\ID)}+\|g\|_{L^p(\ID)}\nonumber\\
&\leq& 4\psi(K)+\|g\|_{L^p(\ID)}.
\end{eqnarray}
Lemma \ref{le:interior_gradient} implies that
\begin{equation}\label{eq:gradientdifference}
\|\nabla h-\nabla  h_0\|\leq c_p(\psi(K)+\|g\|_{L^p(\ID)}).\nonumber
\end{equation}

The quasiconformality of $w$ implies  that  we have for almost every $z$ the estimate $|\nabla h (z)|\geq K^{-1}|(\nabla h_0)(w(z))| |\nabla w(z)|$. Since $|\nabla h_0(z)|=2|z|,$ we obtain by considering the annulus $1-\varepsilon \leq |z|<1$ with arbitrarily small $\varepsilon >0$  that
\begin{eqnarray}\label{eq:gradientdifference2}
\limsup_{|z|\to 1^-} |\nabla w(z)|&\leq& \frac{K}{2}\limsup_{|z|\to 1^-}\big(|\nabla h-\nabla  h_0|+|\nabla  h_0|\big)\nonumber\\
&\leq &
 \frac{c_pK}{2}\big(\psi(K)+\|g\|_{L^p(\ID)}\big) +K.
\end{eqnarray}

We now write $w$ in terms of the standard Poisson decomposition $w=u+f$, where $u$ is harmonic with $u_{|\partial\ID}=w_{|\partial\ID}$,
the term $f$ has vanishing boundary values and it satisfies $\Delta f=\Delta w=g$ in $\ID.$ Then maximum principle applies to the subharmonic function $|\nabla u|=|u_z|+|u_{\bar z}|=|a'|+|b'|$, where $a$ and $b$ are analytic functions such that $u=a+\overline{b}$, together with  Lemma~\ref{le:interior_gradient} shows that $|\nabla w|$ is bounded by $c\|g\|_{L^p(\ID)}.$ All, in all combing these observations with
\eqref{eq:gradientdifference2} we deduce that
\begin{eqnarray}\label{eq:gradientdifference3}
\sup_{|z|<1} |\nabla w(z)|&\leq&\limsup_{|z|\to 1^-}|\nabla u|+\sup_{|z|<1} |\nabla f(z)|\nonumber\\
&\leq &\limsup_{|z|\to 1^-}|\nabla w|+2\sup_{|z|<1} |\nabla f(z)|\nonumber\\
&\leq &
 \frac{c_pK}{2}\big(\psi(K)+\|g\|_{L^p(\ID)}\big) +K  +2c_p\|g\|_{L^p(\ID)}.
\end{eqnarray}
We may thus choose  for small enough $K$
$$
\widetilde C_p(K,t)= K+ \frac{c_pK}{2}\psi(K)+\frac{c_p(K+4)}{2}t,
$$
and the obtained bound has the desired behavior as $K\to 1$ and $t\to 0.$
\end{proof}

Below ${\rm Id}$ stands for the identity matrix ${\rm Id}:=\begin{pmatrix}1&0\\0&1\end{pmatrix}.$ We refer to \cite{stein1} for the standard definition of Sobolev spaces $W^{2,p}$ and for the H\"older(Zygmund)-classes $C^{\alpha}$ and $C^{1,\alpha}.$

\begin{definition}\label{def:controlled} Let $p>2$.
We say that the sequence of bounded Jordan domains $\Omega_n\subset\IC$, with  $0\in\Omega_n$ for each $n\geq 1$, converges in \emph{ $W^{2,p}$-controlled sense}  to the unit disc $\ID$ if  there exist sense-preserving diffeomorphisms $\Psi_n:\ID\to\Omega_n$, normalized by $\Psi_n(0)=0$,  such that
\begin{equation}\label{eq:controlled1}
\lim_{n\to\infty}\|D\Psi_n-{\rm Id}\|_{L^\infty (\ID)}=  0,\quad\textrm{and}\quad \|\Psi_n\|_{W^{2,p}(\ID)}\leq M_0\quad\textrm{for all }\; n\geq 1,
\end{equation}
where  $M_0<\infty$, and
\begin{equation}\label{eq:controlled2}
\|\Delta \Psi_n\|_{L^{p}(\ID)}\to 0\quad\textrm{as }\; n\to \infty.
\end{equation}
\end{definition}

\noindent One should observe that  since $\Psi_n \in W^{2,p}(D)$ with $p>2$ in the above definition,  it follows automatically that $\nabla \Psi_n\in C(\overline{\Omega})$. Hence asking $\Psi_n$ to be a diffeomorphism  makes perfect sense in terms and,  in particular, by \eqref {eq:controlled1} the map $\Psi_n$ is a bi-Lipschitz for large enough $n$.  Also, each  $\Omega_n$ is a  bounded $C^1$- Jordan domain in the plane. It turns out that the above condition is in a sense symmetric with respect to the domains $\ID$ and $\Omega :$
\begin{lemma}\label{le:inverse}
Assume that $\Omega_n$ tends to $\ID$ in a controlled sense and $(\Psi_n)$ is the associated sequence of diffeomorphisms satisfying the conditions of  definition {\rm\ref{def:controlled}}. Then the inverse maps $\Phi_n:=\Psi^{-1}_n:\Omega_n\to \ID$ satisfy
\begin{equation}\label{eq:controlled3}
\lim_{n\to\infty}\|D\Phi_n-{\rm Id}\|_{L^\infty (\Omega_n)}=  0,\quad\textrm{and}\quad \|\Phi_n\|_{W^{2,p}(\Omega_n)}\leq M'_0\quad\textrm{for all }\; n\geq 1,
\end{equation}
together with
\begin{equation}\label{eq:controlled4}
\|\Delta \Phi_n\|_{L^{p}(\Omega_n)}\to 0\quad\textrm{as }\; n\to \infty.
\end{equation}
\end{lemma}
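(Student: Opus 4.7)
The plan is to derive all three conclusions from the chain/inverse-function rule applied to the identity $\Phi_n\circ\Psi_n=\mathrm{id}_\ID$. The Sobolev embedding $W^{2,p}\hookrightarrow C^{1,\alpha}$ (valid for $p>2$ in the plane, with $\alpha=1-2/p$) guarantees that $D\Psi_n$ is continuous, so all the composition and change-of-variable formulas below hold almost everywhere in the classical sense. Writing $D\Psi_n=\mathrm{Id}+E_n$, the hypothesis $\|E_n\|_{L^\infty(\ID)}\to 0$ gives, via a Neumann series, $(D\Psi_n)^{-1}=\mathrm{Id}+F_n$ with $\|F_n\|_{L^\infty(\ID)}\to 0$, and in particular $|J_{\Psi_n}|$ is bounded above and below uniformly for large $n$.

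The $L^\infty$ part of \eqref{eq:controlled3} then follows at once: the inverse function theorem gives
\[
D\Phi_n(w)=\bigl(D\Psi_n(\Phi_n(w))\bigr)^{-1}=\mathrm{Id}+F_n(\Phi_n(w)),
\]
whence $\|D\Phi_n-\mathrm{Id}\|_{L^\infty(\Omega_n)}=\|F_n\|_{L^\infty(\ID)}\to 0$. For the uniform $W^{2,p}$-bound I would differentiate the identity $D\Phi_n(\Psi_n(z))\,D\Psi_n(z)=\mathrm{Id}$ once more in $z$, producing the schematic formula
\[
(D^2\Phi_n)\circ\Psi_n\;=\;-(D\Psi_n)^{-1}\,D^2\Psi_n\,\bigl((D\Psi_n)^{-1}\bigr)^{\otimes 2}.
\]
The change of variables $w=\Psi_n(z)$ then yields $\int_{\Omega_n}|D^2\Phi_n|^p\lesssim\int_{\ID}|D^2\Psi_n|^p\leq M_0^p$, since $(D\Psi_n)^{-1}$ is uniformly bounded and $|J_{\Psi_n}|$ stays away from $0$ and $\infty$. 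Together with the already-proved $L^\infty$-bound on $D\Phi_n$ this gives $\|\Phi_n\|_{W^{2,p}(\Omega_n)}\leq M_0'$.

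For \eqref{eq:controlled4} I would isolate the leading-order term in the same expansion. Writing each $(D\Psi_n)^{-1}$ as $\mathrm{Id}+F_n$ and taking the trace, one obtains
\[
(\Delta\Phi_n)\circ\Psi_n\;=\;-\Delta\Psi_n+R_n,\qquad |R_n(z)|\;\lesssim\;\|F_n\|_{L^\infty}\,|D^2\Psi_n(z)|,
\]
because every remainder term in the expansion carries at least one factor $F_n$. Therefore
\[
\|(\Delta\Phi_n)\circ\Psi_n\|_{L^p(\ID)}\;\leq\;\|\Delta\Psi_n\|_{L^p(\ID)}+C\|F_n\|_{L^\infty}\|D^2\Psi_n\|_{L^p(\ID)}\;\longrightarrow\;0
\]
by \eqref{eq:controlled1}--\eqref{eq:controlled2}, and a final change of variables transfers this to $\|\Delta\Phi_n\|_{L^p(\Omega_n)}\to 0$.

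The only genuinely delicate step is the algebraic bookkeeping in this last display: one must cleanly peel off the term $-\Delta\Psi_n$ from $(\Delta\Phi_n)\circ\Psi_n$ so that the two separately small quantities $\|E_n\|_\infty\to 0$ and $\|\Delta\Psi_n\|_{L^p}\to 0$ combine with the uniformly bounded full Hessian $\|D^2\Psi_n\|_{L^p}\leq M_0$ to force the remainder to vanish. This is the point where the $W^{2,p}$-bound (rather than merely control of $\Delta\Psi_n$) in Definition~\ref{def:controlled} enters essentially; everything else is a routine consequence of Sobolev embedding, the inverse function theorem, and change of variables.
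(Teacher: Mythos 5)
Your proof is correct, and it follows the same basic strategy as the paper's: express the second derivatives of $\Phi_n$ in terms of those of $\Psi_n$ via the inverse-function rule, isolate the contribution that is controlled by $\Delta\Psi_n$, and observe that all remaining terms carry a small factor, then transfer back with a change of variables. The difference is in the bookkeeping. The paper carries out the computation in complex variables, arriving at the explicit formula \eqref{eq:iso} for $A$ with $\Delta\Phi=A\circ\Phi$, and then makes the (not entirely obvious) algebraic observation that the terms containing neither $\Psi_{z\zbar}$ nor $\Psi_\zbar$ collapse to $-\overline{\Psi_{zz}}|\Psi_\zbar|^2$, yielding $|A|\lesssim k\,|D^2\Psi|+|\Delta\Psi|$ with $k$ the maximal dilatation. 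You instead work with the real Hessian: writing $B:=(D\Psi_n)^{-1}=\mathrm{Id}+F_n$ in the identity $(D^2\Phi_n)\circ\Psi_n=-B\,(D^2\Psi_n)\,(B\otimes B)$ and taking the trace, the term with all three copies of $B$ replaced by $\mathrm{Id}$ gives $-\Delta\Psi_n$ on the nose, and every other term carries at least one factor of $F_n$, giving $|R_n|\lesssim\|F_n\|_\infty|D^2\Psi_n|$. This makes the cancellation structurally transparent and avoids the ad hoc inspection of \eqref{eq:iso}; on the other hand the paper's version is slightly sharper, since $k_n$ small is a weaker (and more invariant) hypothesis than $\|D\Psi_n-\mathrm{Id}\|_\infty$ small — though under the hypotheses of the lemma both are available, so the distinction is immaterial here. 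One small point you gloss over but that the paper flags: since $\Psi_n$ is only $W^{2,p}$, the pointwise chain rule for $D^2\Phi_n$ should be justified by approximating $\Psi_n$ by smooth diffeomorphisms; the Sobolev embedding $W^{2,p}\hookrightarrow C^1$ you cite makes $D\Psi_n$ continuous but does not by itself make the Hessian identity hold classically, so a line saying the formula holds a.e.\ by density would tighten the argument.
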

\begin{proof}Conditions \eqref{eq:controlled3} follows easily by employing the formulas for the derivatives of the implicit function, after first approximating by smooth functions. Note, in regards to condition \eqref{eq:controlled4},  we note that in general the inverse of a harmonic diffeomorphism needs not to be harmonic,  so   \eqref{eq:controlled4} is not a direct consequence of  \eqref{eq:controlled2}. However, the first condition in   \eqref{eq:controlled1} tells us that the maximal complex dilatation $k_n$ of $\Psi_n$  tends to 0 as $n\to\infty$, so that $\Psi_n$ is asymptotically conformal and this makes  \eqref{eq:controlled4}  more plausible. Indeed, a direct computations shows that for $C^2$-diffeo $\Psi:\ID\to\Omega$ with maximal dilatation $k$  and controlled derivative $ |D\psi |, |(D\psi)^{-1}|\leq C$, it holds that
\[
\Delta \Phi =A\circ\Phi,
\]
where (recall that the Jacobian can be expressed as $J_\Psi=|\Psi_z|^2-|\Psi_\zbar |^2$)
\begin{eqnarray}\label{eq:iso}
A&=& \frac{4}{(J_\Psi)^3}\Bigg[ -\Psi_\zbar\bigg(\overline{\Psi_{z\zbar}}J_\Psi-\overline{\Psi_z}\Big(\overline{\Psi_z}\Psi_{zz}+\Psi_z\overline{\Psi_{z\zbar}}-\overline{\Psi_\zbar}\Psi_{z\zbar}-\Psi_\zbar\overline{\Psi_{\zbar\zbar}}\Big)\bigg)\nonumber
\\ &&+ \Psi_z\bigg(\overline{\Psi_{zz}}J_\Psi -\overline{\Psi_z}\Big(\overline{\Psi_z}\Psi_{z\zbar}+\Psi_z\overline{\Psi_{zz}}-\overline{\Psi_\zbar}\Psi_{\zbar\zbar}-\Psi_z\overline{\Psi_{z\zbar}}\Big)\bigg)\Bigg]
\end{eqnarray}
This formula   is obtained by  using as  a starting point the identity
$
\Delta\Phi =4({d}/{d\zbar})\Phi_z
$
and expressing $\Phi_z$ in a standard manner in terms of the derivatives of $\Psi$.
We next recall that $\Psi_z$ is bounded and $|\Psi_{\zbar}|\leq k\Psi_{z},$ and observe that in the right hand side  of \eqref{eq:iso} the terms that do not directly contain either $\Psi_{z\zbar}$ or $\Psi_{\zbar}$  as a factor sum up to
\[
\overline{\Psi_{zz}}(J_\Psi-|\Psi_z|^2)= -\overline{\Psi_{zz}}|\Psi_\zbar|^2,
\]
We obtain that
\[
|A|\; \lesssim\;  k|D^2\Psi |+|\Delta\Psi|,
\]
and  \eqref{eq:controlled4} follows by applying this on $\Psi_n.$
\end{proof}

We may now generalize Proposition \ref{pr:first} to include variable image domains that converge to the unit disc in controlled sense.

\begin{theorem}\label{th:second'}  Let $p>2$ and assume that  the planar Jordan domains $\Omega_n$ converge to $\ID$ in $W^{2,p}$-controlled sense.  Moreover, assume that $w_n:\ID\to\Omega_n $ is a $K_n$-quasiconformal normalised map normalised by $w(0)=0$, and with
\[
\lim_{n\to\infty} K_n=1 \quad\textrm{and}\quad  \lim_{n\to\infty} \|\Delta w_n\|_{L^p(\ID)}=0.
\]
Then for large enough $n$ the function $w_n$ is Lipschitz, and moreover its Lipschitz constant tends to 1 as $n\to\infty:$
\begin{equation}\label{eq:limit2}
\lim_{n\to\infty} \| \nabla w_n\|_{L^\infty(\ID)} =1.
\end{equation}
\end{theorem}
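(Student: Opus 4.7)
The plan is to reduce Theorem~\ref{th:second'} to Proposition~\ref{pr:first} through the substitution $\tilde w_n := \Phi_n\circ w_n:\ID\to\ID$, a normalised self-map of the unit disc with $\tilde w_n(0)=0$. Its maximal dilatation satisfies $\tilde K_n\leq K_{\Phi_n}K_n$, and since Lemma~\ref{le:inverse} yields $\|D\Phi_n-{\rm Id}\|_{L^\infty(\Omega_n)}\to 0$, we have $K_{\Phi_n}\to 1$, hence $\tilde K_n\to 1$. If one can also verify $\|\Delta \tilde w_n\|_{L^p(\ID)}\to 0$, Proposition~\ref{pr:first} will give $\|\nabla \tilde w_n\|_{L^\infty(\ID)}\to 1$; and since $w_n=\Psi_n\circ \tilde w_n$ with $\|D\Psi_n\|_{L^\infty}\to 1$, the identity $\nabla w_n=(D\Psi_n)(\tilde w_n)\nabla \tilde w_n$ then delivers the limsup part of \eqref{eq:limit2}, while the liminf is automatic from the surjectivity $w_n(\ID)=\Omega_n$ and $\mathrm{diam}(\Omega_n)\to 2$.

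The core step is therefore to show $\|\Delta \tilde w_n\|_{L^p(\ID)}\to 0$. The chain rule in complex form gives
\begin{align*}
\Delta \tilde w_n &= 4\Phi_{n,uu}(w_n)(w_n)_z(w_n)_{\zbar}+4\Phi_{n,\bar u\bar u}(w_n)\overline{(w_n)_z(w_n)_{\zbar}}\\
&\quad+(\Delta\Phi_n)(w_n)\bigl(|(w_n)_z|^2+|(w_n)_{\zbar}|^2\bigr)\\
&\quad+\Phi_{n,u}(w_n)\Delta w_n+\Phi_{n,\bar u}(w_n)\overline{\Delta w_n}.
\end{align*}
The two terms containing $\Delta w_n$ have $L^p$-norm bounded by $\|D\Phi_n\|_\infty\|\Delta w_n\|_p\to 0$. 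The middle term uses the smallness $\|\Delta\Phi_n\|_{L^p(\Omega_n)}\to 0$ supplied by Lemma~\ref{le:inverse}, after a change of variables $u=w_n(z)$. The first and third terms gain an extra small factor $k_n=(K_n-1)/(K_n+1)\to 0$ via the Beltrami relation $|(w_n)_\zbar|\leq k_n|(w_n)_z|$, while the coefficients $\Phi_{n,uu}$ and $\Phi_{n,\bar u\bar u}$ are only controlled uniformly in $L^p(\Omega_n)$. To turn these pointwise observations into the desired $L^p$-bounds, two auxiliary inputs are required: (a)~a uniform Lipschitz bound $\|\nabla w_n\|_{L^\infty(\ID)}\leq M$ for all large $n$; and (b)~a uniform bound on the Jacobian $J_{w_n^{-1}}$, so that the substitution $u=w_n(z)$ legitimately converts the $L^p(\Omega_n)$-control of $\Delta\Phi_n$ and $D^2\Phi_n$ into $L^p(\ID)$-control of their compositions with $w_n$.

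The principal obstacle is producing the uniform Lipschitz bound on $w_n$ in~(a). I expect it to follow by making the bootstrap \eqref{boothstrap} in the proof of Theorem~\ref{th:first} quantitative: the higher-integrability exponent of $K_n$-quasiconformal maps tends to infinity as $K_n\to 1$, so the recursion $q_{k+1}=2q_k/(4-q_k)$ reaches an exponent $>4$ in a uniform number of iterations, with constants depending only on $\sup_nK_n$, $\sup_n\|\Delta w_n\|_p$, and $M_0=\sup_n\|\Psi_n\|_{W^{2,p}}$. Once~(a) is in hand, the uniform bound on $J_{w_n^{-1}}$ in~(b) follows by applying the same chain-rule argument to $w_n^{-1}=\tilde w_n^{-1}\circ\Phi_n$, using that $\Phi_n$ is uniformly bi-Lipschitz by definition and that $\tilde w_n^{-1}$ inherits a Lipschitz bound from Proposition~\ref{pr:first}.
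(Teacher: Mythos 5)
Your high-level plan is the same as the paper's: pass to $\tilde w_n=\Phi_n\circ w_n$, expand $\Delta\tilde w_n$ by the chain rule, show the Laplacian tends to zero, and invoke Proposition~\ref{pr:first}. The chain-rule expansion and the observation that the $\Phi_{n,uu}$-type terms pick up a factor $k_n$ from the Beltrami inequality are also in the paper. The problem is the technical core, where you part ways with the paper and introduce gaps.

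First, the route through auxiliary inputs (a) and (b) is not needed and is partly circular. The paper never establishes a uniform Lipschitz bound on $w_n$ before applying Proposition~\ref{pr:first}; instead it deliberately aims for $\|\Delta\tilde w_n\|_{L^q(\ID)}\to 0$ with a \emph{fixed} $q\in(2,p)$ rather than $L^p$. This single choice creates the room that your plan is missing. With $\widetilde q=\sqrt{qp}$ sitting strictly between $q$ and $p$, a three-factor H\"older estimate of the form
\[
\int_{\ID}\big|F\circ w_n\big|^q|\nabla w_n|^{2q}\,dA \;\lesssim\; \Big(\int_{\Omega_n}|F|^p\,dA\Big)^{q/p}\Big(\int_{\ID}|\nabla w_n|^{2(\widetilde q/q)'}\Big)^{1/(\widetilde q/q)'}\Big(\int_{\Omega_n}(J_{w_n^{-1}})^{(p/\widetilde q)'}\Big)^{q/(\widetilde q(p/\widetilde q)')}
\]
handles $S_1$ and $S_2$, and the last two factors are uniformly bounded by the ordinary higher integrability of $K_n$-quasiconformal maps once $K_n$ is close enough to $1$. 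No Lipschitz bound on $w_n$, and no pointwise bound on $J_{w_n^{-1}}$, ever enters. By insisting on full $L^p$-control and a uniform Jacobian bound, you leave yourself with nothing to spare in H\"older and are forced to prove, in advance, something of the same strength as the theorem itself.

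Second, even granting item (a), your proposed derivation of (b) has a concrete hole. You argue that $\tilde w_n^{-1}$ ``inherits a Lipschitz bound from Proposition~\ref{pr:first}.'' But Proposition~\ref{pr:first} gives only the upper bound $\|\nabla\tilde w_n\|_{L^\infty}\le\widetilde C_p(K,t)$; it says nothing about a positive lower bound on $|\nabla\tilde w_n|$, which is what you need for $\tilde w_n^{-1}$ to be Lipschitz (equivalently, for $J_{\tilde w_n}$ to be bounded below, hence $J_{\tilde w_n^{-1}}$ bounded above). A quasiconformal Lipschitz self-map of $\ID$ need not be co-Lipschitz without additional structure such as harmonicity, so this step does not follow. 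Consequently the uniform bound on $J_{w_n^{-1}}$ is unsubstantiated.

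Finally, even item (a) itself, while plausible via a uniform-constants version of the bootstrap~\eqref{boothstrap} under the hypotheses $\sup_n K_n<\infty$, $\sup_n\|\Delta w_n\|_p<\infty$, $\sup_n\|\Psi_n\|_{W^{2,p}}<\infty$, is only sketched, not carried out. The paper's approach shows it is unnecessary: drop to a fixed $q<p$, use $\widetilde q=\sqrt{qp}$, and let higher integrability do the work. I recommend replacing (a) and (b) by this H\"older-with-room argument.
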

\begin{proof}
Let $\Psi_n: \ID\to \Omega_n$ be the maps as in definition \ref{def:controlled}. By renumbering, if needed, we may assume that that $|\Psi'_n(z)-1|<1/2$ for
all $n$ and $z\in \ID$. Write $\Phi_n=\Psi^{-1}_n$ and define
\[
\widetilde w_n:=\Psi^{-1}\circ w_n =\Phi_n\circ w_n:\ID\to\ID.
\]
 Then $\widetilde w_n$ is $\widetilde K_n$-quasiconformal, with $\widetilde K_n\to 1$ as $n\to \infty$ by the first condition in \eqref{eq:controlled3}.  Fix an index $q\in (2,p)$. By conditions  \eqref{eq:controlled1}, \eqref{eq:controlled3} and Proposition \ref{pr:first}, in  order to prove \eqref{eq:limit2}  we just need to verify  that
\begin{equation}\label{eq:enough}
\lim_{n\to\infty} \|\Delta \widetilde w_n\|_{L^q(\ID)} =0.
\end{equation}
A computation yields that
\begin{eqnarray}\label{eq:explicite_enough}
\Delta \widetilde w_n &=& (\Delta \Phi_n)(w_n)\Big( |(w_n)_z|^2+|(w_n)_\zbar|^2\Big)\nonumber\\
&&+\; 4\Big((\Phi_n)_{zz}(w)(w_n)_z(w_n)_\zbar+(\Phi_n)_{\zbar\zbar}(w_n)\overline{(w_n)_z(w_n)_\zbar}\Big)\nonumber\\
&&+\; \Big((\Phi_n)_z(w_n)\Delta w_n+(\Phi_n)_\zbar(w_n)\overline{\Delta w_n}\Big)\nonumber\\
&=:& S_1+S_2+S_3,
\end{eqnarray}
say.

Since $|D\Phi_n|$ remains uniformly bounded and we know that $ \|\Delta w_n\|_{L^p(\ID)}\to 0$, we see that $\|S_3\|_{L^p(\ID)}$ tends to zero as $n\to\infty$,  whence the same is true for the  $L^q$-norm. Set $\widetilde q:=\sqrt{qp}$ so that $q<\widetilde q<p.$
Since $\widetilde w_n$ is a normalized $K_n$-quasiconformal self-map of the unit disc $\ID$,  and $K_n\to 1$, we may assume (again by discarding small values of $n$ and relabeling, if needed)  by  the higher integrability of quasiconformal maps that $\int_{\ID}|\nabla w_n|^{2(\widetilde q/q)'} <C$  and $ \int_{\Omega}(J_{w_n^{-1}})^{(p/\widetilde q)'}dA(z) <C$ for all $n$. Here e.g.  $(\widetilde q/q)'$ stands for the dual exponent.  Denoting $k_n=(K_n-1)/(K_n+1)$ we thus obtain
for any measurable function $F$ on $\Omega$
\[\begin{split}
\int_{\ID}\big| F\circ w_n |(w_n)_z|^2
\big|^qdA(z)\; &\leq \;\Big( \int_{\ID}\big| F \circ w_n\big|^{\widetilde q} dA(z)\Big)^{q/\widetilde q}
\Big( \int_{\ID}| \nabla w_n|^{2(\widetilde q/q)'} dA(z)\Big)^{1/(\widetilde q/q)'}\\
&\lesssim \Big( \int_{\ID}\big| F \circ w_n\big|^{\widetilde q} dA(z)\Big)^{q/\widetilde q}\leq
\Big( \int_{\Omega}\big| F\big|^{\widetilde q} J_{w_n^{-1}}dA(z)\Big)^{q/\widetilde q}\\
&\lesssim
 \Big( \int_{\Omega}\big| F\big|^{ p} dA(z)\Big)^{q/p}\Big( \int_{\Omega}(J_{w_n^{-1}})^{(p/\widetilde q)'}dA(z)\Big)^{q/(\widetilde q(p/\widetilde q)')}
\\ &\leq \;  \Big( \int_{\Omega}\big| F\big|^{ p} dA(z)\Big)^{q/p}.
\end{split}
\]
By employing this formula and Lemma \ref{le:inverse} we obtain immediately that
\[
\| S_1\|_{L^q(\ID)}\lesssim \|\Delta \Phi_n\|_{L^p(\Omega)}\to 0\quad \textrm{as}\quad n\to\infty.
\]
In a similar vain
\[
\| S_2\|_{L^q(\ID)}\lesssim k_n\to 0\quad \textrm{as}\quad n\to\infty.
\]
This ends the proof of the theorem.
\end{proof}

We next examine what kind of convergence of the boundaries $\partial\Omega_n\to \partial \ID $  imply $W^{2,p}$-controlled convergence of the domains itself. First of all, given $\psi_n:\ID\to\Omega$ as in Definition \ref{def:controlled} we have $\Psi_n\in W^{2,p}(\ID),$ so by the standard trace theorem for  the Sobolev spaces  the induced map on the boundary satisfies ${\Psi_n}_{|\partial \ID}\in B^{2-1/p}_{p,p}(\ID).$ On the other hand,
for $p>2$ we may pick $\alpha, \alpha'\in (1/2,1)$ so that
\[
 C^{1,\alpha'}(\partial\ID)\subset B^{2-1/p}_{p,p}(\ID)\subset C^{1,\alpha}(\partial\ID),
\]
see \cite{Triebel}.
Hence about the best one can hope is to have a theorem where the boundary converges in $C^{1,\alpha}$ for some $\alpha >1/2.$ In fact, this can be realized:
\begin{theorem}\label{alpha}
Let $(\Omega_n)$ be a a sequence of bounded Jordan domains in $\IC$ such that  there is the parametrisation
\[
\partial \Omega_n=\{ f_n(\theta) \; | \; \theta \in (0,2\pi)\}.
\]
for each $n$, where  $f_n$ satisfies for some $\alpha >1/2$
\begin{equation}\label{eq:condition}
\| f_n(\theta)-e^{i\theta}\|_{L^\infty(\mathbf{T})}\to 0 \quad \textrm{as}\quad n\to\infty\quad \textrm{and}\quad  \sup_{n\geq 1}\| f_n(\theta)\|_{C^{1,\alpha}(\IT)}<\infty.
\end{equation}
Then the sequence $(\Omega_n)$ converges to $\ID$ in  $W^{2,p}$-controlled manner. In particular, the conclusion of Theorem \ref{th:second'} holds true for the sequence $(\Omega_n).$
\end{theorem}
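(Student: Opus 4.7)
My plan would be to take $\Psi_n$ to be (a small Möbius correction of) the Poisson extension $P[f_n]$ of the given boundary parametrization $f_n$. The rationale is that the harmonic extension automatically gives $\Delta\Psi_n=0$, so condition \eqref{eq:controlled2} is satisfied for free. The only cosmetic drawback is that $P[f_n](0)=\frac{1}{2\pi}\int_0^{2\pi}f_n(\theta)\,d\theta\to 0$ but is not identically zero; to remedy this, once I have shown that $D P[f_n]\to {\rm Id}$ uniformly (so that $P[f_n]$ is a diffeomorphism of $\ID$ onto $\Omega_n$ for large $n$, via local invertibility plus a standard degree argument using the boundary homeomorphism), there will be a unique $b_n\in \ID$ with $P[f_n](b_n)=0$ and $b_n\to 0$. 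I would then take the Möbius automorphism $\tau_n$ of $\ID$ with $\tau_n(0)=b_n$ and set $\Psi_n:=P[f_n]\circ \tau_n$; this is harmonic because post-composition of a harmonic map with a conformal map preserves harmonicity, and since $\tau_n\to z$ in $C^\infty(\overline\ID)$ the quantitative estimates transfer without loss from $P[f_n]$ to $\Psi_n.$

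For the uniform $W^{2,p}$-bound, I would use the hypothesis $\alpha>1/2$ to pick an exponent $p\in(2,1/(1-\alpha))$, which guarantees the embedding $C^{1,\alpha}(\IT)\hookrightarrow B^{2-1/p}_{p,p}(\IT)$ recalled from \cite{Triebel} just before the theorem. Standard elliptic regularity for the Dirichlet problem on the smooth domain $\ID$ shows that the Poisson extension maps $B^{2-1/p}_{p,p}(\IT)$ continuously into $W^{2,p}(\ID)$, so chaining the estimates gives
\[
\|P[f_n]\|_{W^{2,p}(\ID)}\;\lesssim\; \|f_n\|_{B^{2-1/p}_{p,p}(\IT)}\;\lesssim\; \|f_n\|_{C^{1,\alpha}(\IT)}\;\leq\; M_0,
\]
uniformly in $n$, and the same bound (with a harmless constant factor) holds for $\Psi_n.$

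For the $L^\infty$-convergence $D\Psi_n\to{\rm Id}$, the key observation is that $z\mapsto z$ is itself the harmonic extension of $e^{i\theta},$ so $P[f_n](z)-z=P[f_n-e^{i\theta}](z).$ Standard Hölder interpolation between the hypotheses $\|f_n-e^{i\theta}\|_{L^\infty(\IT)}\to 0$ and $\sup_n\|f_n\|_{C^{1,\alpha}(\IT)}<\infty$ will give $\|f_n-e^{i\theta}\|_{C^{1,\beta}(\IT)}\to 0$ for every $\beta\in(0,\alpha).$ Since the Poisson extension is continuous from $C^{1,\beta}(\IT)$ into $C^{1,\beta}(\overline\ID)$ for $\beta\in(0,1)$, this yields $\|D P[f_n]-{\rm Id}\|_{L^\infty(\ID)}\to 0,$ hence $\|D\Psi_n-{\rm Id}\|_{L^\infty(\ID)}\to 0.$ All three conditions of Definition \ref{def:controlled} are then verified, and the final assertion of Theorem \ref{alpha} follows immediately from Theorem \ref{th:second'}.

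The only subtle point of the argument is the selection of $p>2$ compatible with the embedding $C^{1,\alpha}\hookrightarrow B^{2-1/p}_{p,p}$, which forces $1-\alpha<1/p<1/2$ and so $\alpha>1/2$; this threshold is exactly what the hypothesis provides. Beyond that, everything is routine: the interpolation inequality for Hölder spaces, the boundary regularity of the Poisson integral on the disc, and elliptic regularity for the Dirichlet problem, so I do not anticipate any further obstacles.
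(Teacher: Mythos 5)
Your proposal is correct and follows essentially the same route as the paper: take $\Psi_n$ to be the harmonic (Poisson) extension of $f_n$, obtain $D\Psi_n\to\mathrm{Id}$ via H\"older interpolation together with continuity of the Poisson extension (equivalently, of the Hilbert transform) on $C^{1,\beta}(\IT)$, and derive the uniform $W^{2,p}$ bound for some $p>2$ from $\alpha>1/2$ --- the paper does this last step via the pointwise estimate $|D^2G_n(z)|\lesssim (1-|z|)^{\alpha-1}$ from Stein rather than your Besov embedding $C^{1,\alpha}\hookrightarrow B^{2-1/p}_{p,p}$ plus elliptic regularity, but the two are interchangeable. Your M\"obius precomposition $\Psi_n:=P[f_n]\circ\tau_n$ to enforce $\Psi_n(0)=0$ exactly is a refinement the paper in fact leaves implicit, and it is harmless since $b_n\to 0$ forces $\tau_n\to\mathrm{id}$ in $C^\infty(\overline{\ID})$ and precomposing a harmonic map with a conformal map preserves harmonicity.
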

\begin{proof} Let us first observe that instead of  \eqref{eq:condition} we may fix $\alpha'\in (1/2,\alpha)$ and assume that
\[
\| f_n(\theta)-e^{i\theta}\|_{C^{1,\alpha'}}\to 0 \quad \textrm{as}\quad n\to\infty .
\]
Namely, this follows by applying  interpolation on \eqref{eq:condition}.
Write $g_n(\theta)= f_n(\theta)-e^{i\theta}.$ By relabeling, if needed, we may assume that for all $n\geq 1$ we have  $\| g_n\|_{C^{1,\alpha}(\IT)}\leq 1/10,$ say. Since ${\rm Id}:\IT\to \IC$ is 1-bi-Lipschitz, and the Lipschitz norm of $g_n$ is small  we obtain that $f_n:\IT\to \partial\Omega_n$ is a diffeomorphism. We simply define $\Psi_n$ as the harmonic extension
\begin{eqnarray*}
\Psi_n(z) &= &\frac{1}{2\pi}\int_0^{2\pi}P(z,e^{it})f_n(e^{it})dt \;=\; z\;+\; \frac{1}{2\pi}\int_0^{2\pi}P(z,e^{it})g_n(e^{it})dt\\
&=& z\;+\; G_n(z),\qquad z\in \ID.
\end{eqnarray*}
Since $\|g'_n\|_\infty\to 0$ and $\| Hg'_n\|_\infty\to 0$ (recall that the Hilbert transform $H$ is continuous in $C^\alpha(\IT)$), we may also assume that
$|DG_n(z)|\leq 1/2$ for all $n$, and we have $\lim_{n\to\infty}\| DG_n\|_{L^\infty (D)} =0.$  Especially, $\Psi_n:\overline{\ID}\to\overline{\Omega_n}$ is $C^1$ and bi-Lipschitz, hence diffeomorphism. The first condition in \eqref{eq:controlled1} follows immediately, and condition \eqref{eq:controlled2} is immediate since $\Psi_n$ is harmonic. It remains to verify the second condition in  \eqref{eq:controlled1}. For that end observe that by \cite{stein1} the fact that
$\| g_n\|_{C^{1,\alpha}(\mathbf{T})}\leq C$ for all $n$ implies  that the Poisson extension satisfies
\[
\| D^2 G_n(z)\|\leq \frac{C'}{(1-|z|)^{1-\alpha}}.
\]
This obviously yields the desired uniform bound for $\|D^2 G_n\|_{L^p(\ID)}$ if we take $p<(1-\alpha)^{-1}.$
\end{proof}

Another condition is obtained by specializing to Riemann maps -- the proof of the the preceding theorem could also be  based on certain results of Smirnov concerning the regularity of conformal extensions and  the following lemma:

\begin{lemma}\label{def:controlled_riemann}  Let $p>2.$
The sequence of bounded Jordan domains $\Omega_n\subset\IC$ converges in $W^{1,p}$-controlled sense  to the unit disc $\ID$ if  the Riemann maps $F_n:\ID\to\Omega_n$  $($normalized by $F_n(0)=0$  and $\mathrm{arg}\,F'_n(0)>0)$ satisfy
\begin{equation}\label{eq:controlled}
\lim_{n\to\infty}\|F'_n-1\|_{L^\infty (\ID)}=  0,\quad\textrm{and}\quad \|F''_n\|_{L^p(D)}\leq M_0\quad\textrm{for all }\; n\geq 1,
\end{equation}
with some  $M_0<\infty.$
\end{lemma}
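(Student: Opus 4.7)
The plan is to take $\Psi_n := F_n$ itself as the diffeomorphisms witnessing the controlled convergence in Definition \ref{def:controlled}; every required property follows from elementary facts about holomorphic functions, so there is essentially no obstacle beyond organizing the verifications.

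First I would observe that each Riemann map $F_n$ is biholomorphic, hence a $C^\infty$ sense-preserving diffeomorphism of $\ID$ onto $\Omega_n$ with $F_n(0)=0$. Moreover $F_n$ is harmonic, so $\Delta F_n\equiv 0$, and condition \eqref{eq:controlled2} in Definition \ref{def:controlled} is verified trivially (with left-hand side identically zero).

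Next I would verify the $L^\infty$-convergence of $DF_n$ to the identity. For a holomorphic function $f$ on $\ID$, the real Jacobian matrix $Df$ acts on $\IR^2\cong \IC$ by complex multiplication by $f'$: writing $f=u+iv$ and using Cauchy--Riemann, one has $Df=\begin{pmatrix}u_x & -v_x\\ v_x & u_x\end{pmatrix}$, which has operator norm $|f'|$. Consequently $\|Df-\mathrm{Id}\|_{\rm op}=|f'-1|$ pointwise. Applying this to $F_n$, the hypothesis $\|F'_n-1\|_{L^\infty(\ID)}\to 0$ gives the first part of \eqref{eq:controlled1}.

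Finally I would establish the uniform $W^{2,p}$-bound. Since $F_n(0)=0$ and $|F'_n|\leq 1+\|F'_n-1\|_{L^\infty(\ID)}$ is uniformly bounded, integration along radii gives a uniform bound on $|F_n|$ on $\ID$; hence $\|F_n\|_{L^p(\ID)}$ and $\|DF_n\|_{L^p(\ID)}$ are controlled. For the second derivatives, each of $u_{xx},u_{xy},u_{yy},v_{xx},v_{xy},v_{yy}$ equals, up to sign, either $\mathrm{Re}\,F''_n$ or $\mathrm{Im}\,F''_n$ via Cauchy--Riemann, so $|D^2F_n|\lesssim |F''_n|$ pointwise. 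The hypothesis $\|F''_n\|_{L^p(\ID)}\leq M_0$ therefore yields the uniform $W^{2,p}$-bound, completing the verification of the second condition in \eqref{eq:controlled1}. The lemma is thus essentially the tautological observation that the Riemann maps themselves saturate the definition.
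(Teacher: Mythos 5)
Your proof is correct and is exactly the argument the paper has in mind: the paper's own proof is simply the remark ``Obvious after the definition of controlled convergence,'' and you have fleshed out that observation by taking $\Psi_n := F_n$ and checking each clause of Definition~\ref{def:controlled} via the Cauchy--Riemann identities. (Note also that the lemma statement's ``$W^{1,p}$-controlled'' is presumably a typo for the ``$W^{2,p}$-controlled'' convergence of Definition~\ref{def:controlled}, which is what you correctly verify.)
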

\begin{proof}
Obvious after the definition of controlled convergence.
\end{proof}

\begin{remark}\label{rem:section3} {\rm It is an open question whether one can weaken the condition   $\alpha >1/2$ in Theorem \ref{th:second}.}
\end{remark}

\section{Proof of Theorem \ref{th:smirnov} and Corollary \ref{co:smirnov} : A Smirnov theorem for qc-maps}\label{se:proof3}

\begin{proof}[Proof of Theorem~\ref{th:smirnov}] We first assume that $f$ is as in the theorem so that $\Delta f(z)\leq (1-|z|)^{-a}$ with $a\in (0,1).$
Then we are to show that the boundary map induced by $w$ is absolutely continuous.
For that end we need two simple lemmas.
\begin{lemma}\label{lemo}
Assume that $u\in C(\overline{D)}$ is a harmonic mapping of the unit disk into $\mathbf{C}$ such that the the $U:=u_{|\mathbf{T}}$ is a homeomorphism and $U(\mathbf{T})=\Gamma$ is a rectifiable Jordan curve.  Then   $|\Gamma_r|:=\int_{\mathbf{T}}|\partial_\theta u(re^{i\theta})|d\theta$ is increasing in $r$ so that $|\Gamma_r|\le |\Gamma|
$. Especially, the angular derivative of $u$ satisfies $\partial_\theta u(z)\in h^1$.
\end{lemma}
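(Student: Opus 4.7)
The plan is to combine two classical observations: the monotonicity of circular $L^1$-means of subharmonic functions, applied to $|\partial_\theta u|$, and a Poisson-integral computation that uses the bounded variation of the boundary parametrization.

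First, since $u$ is a $\IC$-valued harmonic function on $\ID$, its angular derivative $\partial_\theta u(z) = iz\,u_z(z)-i\zbar\,u_\zbar(z)$ is again $\IC$-valued harmonic on $\ID$. For any such $F$, the mean-value identity together with the triangle inequality
\[
|F(z_0)|=\Bigl|\frac{1}{2\pi}\int_0^{2\pi}F(z_0+\rho e^{i\vartheta})\,d\vartheta\Bigr|\leq \frac{1}{2\pi}\int_0^{2\pi}|F(z_0+\rho e^{i\vartheta})|\,d\vartheta,
\]
valid on every closed disc $\overline{B(z_0,\rho)}\subset \ID$, shows that $|F|$ is subharmonic on $\ID$. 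Applied to $F=\partial_\theta u$, the classical monotonicity of circular means for subharmonic functions implies that $r\mapsto |\Gamma_r|=\int_0^{2\pi}|\partial_\theta u(re^{i\theta})|\,d\theta$ is non-decreasing on $(0,1)$.

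The next step is to establish the uniform upper bound $|\Gamma_r|\leq |\Gamma|$. Since $U:\IT\to\Gamma$ is a homeomorphism onto the rectifiable Jordan curve $\Gamma$, standard measure-theoretic facts give $U\in BV(\IT)$ with total variation $|dU|(\IT)=|\Gamma|$; in particular $dU$ is a finite complex Borel measure on $\IT$ of mass $|\Gamma|$. Starting from the Poisson representation $u(re^{i\theta})=\frac{1}{2\pi}\int_\IT P_r(\theta-\phi)U(\phi)\,d\phi$, Stieltjes integration by parts in $\phi$ (legitimate since $P_r$ is smooth on $\IT$ and $U$ is BV) yields
\[
\partial_\theta u(re^{i\theta})=\frac{1}{2\pi}\int_\IT P_r(\theta-\phi)\,dU(\phi).
\]
Taking absolute values, exploiting the positivity of $P_r$, applying Fubini and the identity $\frac{1}{2\pi}\int_0^{2\pi}P_r(\theta-\phi)\,d\theta=1$, one concludes
\[
|\Gamma_r|\;\leq\;\frac{1}{2\pi}\int_\IT \Bigl(\int_0^{2\pi}P_r(\theta-\phi)\,d\theta\Bigr)\,d|dU|(\phi)\;=\;|dU|(\IT)\;=\;|\Gamma|.
\]

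Combining the two steps will yield $|\Gamma_r|\leq |\Gamma|$ together with monotonicity in $r$, and the membership $\partial_\theta u\in h^1$ then follows immediately from the definition of the harmonic Hardy space $h^1$ as those harmonic functions with uniformly bounded circular $L^1$-norms. The only mildly delicate point is the identification of the total variation of $U$ with the length of $\Gamma$, which relies crucially on $U$ being a homeomorphism onto a rectifiable curve; after this, everything reduces to standard facts about subharmonic functions and Poisson integrals of BV boundary data.
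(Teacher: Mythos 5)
Your proof is correct and follows essentially the same route as the paper: both rest on identifying $\partial_\theta u$ as the Poisson integral of the finite measure $dU$ (of total mass $|\Gamma|$, since $U$ is a $BV$ homeomorphism onto a rectifiable curve). The only difference is presentational: you derive the monotonicity and the uniform bound directly (subharmonicity of $|\partial_\theta u|$, then positivity of $P_r$ plus Fubini), where the paper cites Rudin's Theorem 11.17 for the same facts about $h^1$.
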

\begin{proof}[Proof of Lemma~\ref{lemo}]
By differentiating  the Fourier-series representation
$$
u(re^{i\theta})=\sum_{n=-\infty}^\infty\widehat g_n r^{|n|}e^{in\theta}
$$
we see immediately that $\partial_\theta u(z)$ is the harmonic
extension to $U$ of the distributional derivative $\partial_\theta g$.
By assumption, $g$ is of bounded variation, and hence $\partial_\theta g$ is a finite (signed) Radon measure, which implies that $\partial_\theta u\in h^1$. It is well-known (see \cite[11.17]{Rudin}) that for functions in $h^1$ the integral average $ \int_{\mathbf{T}}|\partial_\theta u(re^{i\theta})|d\theta$ is increasing in $r$.
\end{proof}
\begin{lemma}\label{wei}
Let $g\in L^p(\mathbf{U})$ with $p>1$. Then  there is a unique solution to the Poisson equation $\triangle v=g$ such that $v\in C(\overline{U})$ and $v_{|\ttt}=0.$ Moreover, the  weak derivative  $Dv$ can be modified in a set of measure zero so that
$$
\int_{0}^{2\pi}|Du(re^{i\theta})|d\theta \leq C(g)<\infty\qquad\textrm{for}\;\; r\in (1/2,1).
$$
\end{lemma}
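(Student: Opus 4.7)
The plan is to construct $v$ via the standard Green's function representation and then derive the circular $L^1$-bound for its gradient through a Fubini argument followed by H\"older's inequality. Concretely, I would define
\[
v(z):=\int_{\ID}G(z,\omega)g(\omega)\,dA(\omega),
\]
with $G$ the Green kernel appearing in \eqref{e:POISSON1}. H\"older's inequality combined with the pointwise estimate $|G(z,\omega)|\lesssim 1+|\log|z-\omega||$ shows that $v\in C(\overline{\ID})$ with $v|_{\ttt}=0$, while the familiar distributional computation gives $\Delta v=g$ in $\ID$. Uniqueness is then immediate: any competing solution would differ from $v$ by a distributionally harmonic (hence, by Weyl's lemma, classically harmonic) function continuous on $\overline{\ID}$ with vanishing trace, which must be identically zero by the maximum principle.

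For the gradient estimate, I would take as the (continuous) modification of $Dv$ the representative obtained by differentiating under the integral sign; inserting \eqref{nablaGreen} gives the pointwise bound
\[
|\nabla v(z)|\leq \frac{1}{\pi}\int_{\ID}\frac{|g(\omega)|}{|z-\omega|}\,dA(\omega).
\]
Integrating this over the circle $|z|=r$ with $r\in(1/2,1)$ and interchanging the order of integration, which is legitimate since everything is non-negative, reduces the problem to controlling the rotationally symmetric kernel
\[
K_r(\rho):=\int_0^{2\pi}\frac{d\theta}{|re^{i\theta}-\rho|},\qquad \rho\in [0,1).
\]

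The main technical point --- and what I expect to be the chief, if mild, obstacle --- is establishing the uniform bound $K_r(\rho)\lesssim 1+\log^+\frac{1}{|r-\rho|}$ for $r\in(1/2,1)$. This follows from the identity $|re^{i\theta}-\rho|^2=(r-\rho)^2+4r\rho\sin^2(\theta/2)$ and elementary one-dimensional calculus: the case $\rho\le 1/4$ is trivial since the denominator is then bounded below, while for $\rho>1/4$ the standard lower bound $\sin(\theta/2)\gtrsim|\theta|$ reduces the estimate to a classical logarithmic integral. Once this is in hand, the weight $\omega\mapsto 1+\log^+\frac{1}{|r-|\omega||}$ lies in $L^{p'}(\ID)$ with norm bounded uniformly in $r\in(1/2,1)$ --- logarithmic singularities being integrable to every positive power --- so a final application of H\"older's inequality delivers
\[
\int_0^{2\pi}|\nabla v(re^{i\theta})|\,d\theta\leq C(p)\|g\|_{L^p(\ID)},
\]
completing the argument.
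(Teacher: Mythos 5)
Your proof is correct, but it takes a genuinely different route from the paper's. The paper invokes Agmon--Douglis--Nirenberg / Gilbarg--Trudinger elliptic regularity to place $v$ in $W^{2,p}(\ID)$, then obtains a uniform $B^{1-1/p}_{p,p}$-bound on the circle restrictions of $Dv$ from the trace theorem for $W^{1,p}$, and concludes via the embeddings $B^{1-1/p}_{p,p}\subset L^p\subset L^1$. You instead work directly with the Green potential: differentiate under the integral sign, apply Fubini, estimate the angular average $\int_0^{2\pi}|re^{i\theta}-\omega|^{-1}\,d\theta$ by $C\bigl(1+\log^+\tfrac{1}{|r-|\omega||}\bigr)$, and finish with H\"older against this $L^{p'}$ weight, whose norm over $\ID$ is bounded uniformly in $r$ because logarithmic singularities are integrable to every power. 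Your argument is more elementary and self-contained (no $W^{2,p}$ regularity theory, no Besov trace machinery), at the cost of an explicit kernel computation; the paper's version is shorter on the page by leaning on standard citations and yields the stronger by-product $v\in W^{2,p}(\ID)$, which the lemma does not actually require. One small caveat: your parenthetical claim that the representative of $Dv$ obtained by differentiating under the integral is \emph{continuous} is only valid for $p>2$. For $1<p\leq 2$ the formula $\nabla v(z)=\int_\ID\nabla_zG(z,\omega)g(\omega)\,dA(\omega)$ defines the representative merely almost everywhere; that is, however, all your Fubini/H\"older argument actually uses, so the gap is cosmetic.
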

\begin{proof} The classical regularity theory for elliptic equations (see \cite{ADN},\cite{gt}) yields a quick approach, as it guarantees that our Poisson equation has a unique solution  $v$ in the Sobolev space $W^{2,p}(U)$ (which is of course given by the Green potential, see \eqref{e:POISSON1}) and we have continuity up to the boundary. The derivatives
satisfy $\partial_z,\partial_{\overline z}\in W^{1,p}(U)$. Especially,
we then have $\|Dv\|_{W^{1,p}}(B(0,r))\leq C'$ for any $r\in (1/2,1)$. At this stage the trace theorem (see e.g. \cite{Triebel}) for the  space $W^{1,p}(U)$
and a simple scaling argument shows for a suitable representative
of $Dv$ that
$$
\|(Dv)_r\|_{B^{1-1/p}_{p,p}(\ID)}\leq C'\quad\textrm{for}\;\; r\in (1/2,1).
$$
Here $(Dv)_r$ stands for the function $\ttt\ni\theta\mapsto v(re^{i\theta}).$ The claim follows by observing the continous imbeddings
$B^{1-1/p}_{p,p}(\ID)\subset L^p(\ID)\subset L^1(\ID).$
\end{proof}

Recall also  that any
analytic (or anti-analytic) function in $h^1$ can be represented as the   Poisson integral of an $L^1$-function, see \cite[Theorem 17.11]{Rudin} or \cite{G}.
In order to proceed towards the absolute continuity of boundary values of $f$, write
$f=a+\overline{b}+v$, where $v$ solves $\Delta  v=g:=\Delta f$ with $v_{|\ttt}=0$ and $a$ and  $b$ are analytic in the unit disk.
Since $u:=a+\bar b=\mathcal{P}[f_{|\ttt}]$, where $f_{|\ttt}$ is a homeomorphism, it follows from Lemma~\ref{lemo} that $\partial_\theta u=i(za'-\overline{zb'})\in h^1(\mathbf{U})$, because $f(\mathbf{T})$ is a rectifiable curve.
Further, the weak derivatives satisfy
$$f_z=a'+v_z,\ \  f_{\bar z}=\overline{b'}+v_{\bar z}$$
Now we use that $$|f_{\bar z}|\le k|f_z|,\ \ \ k=\frac{K-1}{K+1}$$ which implies that $$|a'+v_z|\le k|b'+\overline{v_{\bar z}}|.$$
As $$b'=\frac{\bar z}{z} \overline{a'}-\frac{i}{z} \overline{u_\theta},$$ we obtain for $z\not=0$ that
\[\begin{split}|a'|&\le k\Big|\frac{\bar z}{z} \overline{a}'-\frac{i}{z}  \overline{u_\theta}+\overline{v_{\bar z}}\Big|+|v_z|.\end{split}\]
This yields for $|z|\geq 1/2$ the inequality, valid almost everywhere
\begin{eqnarray*}
|a'|\le \frac{1}{1-k} (2| \overline{u_\theta}|+|\overline{v_{\bar z}}|+|v_z|).
\end{eqnarray*}

Our assumption on the size of the Laplacian of $f$ yields that $\Delta f\in L^p(\ID)$ for some  $p>1$. By combining this with above inequality, and noting that   $\overline{u_\theta}\in h^1$ by Lemma \ref{lemo}, we infer   (using simple argument that uses Fubini as the above inequality holds only for a.e. $z$) that  $a'\in H^1$.  Then the relation  $b'=\frac{\bar z}{z} \overline{a'} -\frac{i}{z} \overline{u_\theta}$ verifies that also $b\in H^1$.
Thus $\partial_\theta u$ is the Poisson integral of an $L^1$ function, and we conclude that  $f_{|\ttt}=u_{|\ttt}$ is absolutely continuous.

\medskip

In order to prove the optimality of Theorem $\ref{th:smirnov}$, we are to construct quasiconformal maps with non-absolutely continuous boundary values, but at the same time with not too large Laplacian. For that end it is easier to  work in the upper half space $\IC^+:=\{ z: {\rm Im}z>0\}.$
We will produce the desired functions with the help of Zygmund measures.
Recall first that a {bounded and continuous} function  $g:\IR\to\IR$ is \emph{Zygmund} if
\[
\big| g(x+t)+g(x-t)-2g(x)\big|\leq C|t| \quad\textrm{for all}\;\; x,t\in\IR.
\]
The smallest possible $C$ above is the Zygmund norm of $g$. If $g$ is increasing, its derivative is a positive finite Borel measure, $g'=\mu$, on $\IR$ and we call $g$ a \emph{singular Zygmund function} if, in addition, $\mu$ is singular. It is well-known that there exists singular Zygmund measures, see \cite{pir} or \cite{kahane}. In general, we refer the reader to the interesting article \cite{aan} for further information on this type of measures.

We next recall a modified version of the Beurling-Ahlfors extension, due to Fefferman, Kenig and Pipher \cite{fkp}. For that end denote the Gaussian density by $\psi(x):=(2\pi)^{-1/2}e^{-x^2/2}$, and notice that $-\psi'(x)= -x\psi(x).$ As usual, for $t>0$ we define the dilation
$\psi_t(x):=t^{-1}\psi (x/t),$ and $\psi'_t$ is defined in analogous way. Then the extension $u$ of and (at most polynomially) increasing homeomorphism $g:\IR\to\IR$ is defined by setting
\begin{equation}\label{eq:fkp-extension}
u(x+it):= (\psi_t*g)(x)+i(-\psi'_t*g)(x),\quad \textrm{for all}\quad x+it\in\IC^+.
\end{equation}
Obviously, $u$ is smooth in $\IC^+$ and it has the right boundary values. We have:
\begin{lemma}[{\cite[Lemma 4.4.]{fkp}}]\label{le:fkp} If $g:\IR\to\IR$ is quasisymmetric, then the extension $u$ defined via \eqref{eq:fkp-extension} defines a quasiconformal homeomorphism of $\IC^+$ whose boundary map coincides with $g$.
\end{lemma}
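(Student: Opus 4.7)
The plan is to verify in order: smoothness of $u$ in $\IC^+$, the boundary relation $u|_{\IR}=g$, that $u(\IC^+)\subset\IC^+$, and finally the uniform Beltrami estimate $|u_{\bar z}|\le k|u_z|$ with $k<1$ depending only on the quasisymmetry constant $\rho$ of $g$. Smoothness on the open upper half plane is immediate from convolution with Schwartz-class kernels. The boundary values follow since $\psi_t$ is an approximate identity, so $\psi_t*g\to g$ uniformly on compacta by continuity of $g$, while $\int_\IR\psi'=0$ forces $(-\psi'_t*g)(x)\to 0$ uniformly on compacta.

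For positivity of $\mathrm{Im}\,u$ I would use the identity $-\psi'(s)=s\psi(s)$ and $\int_\IR s\psi_t(s)\,ds=0$ to rewrite, up to a sign convention,
\[
\mathrm{Im}\,u(x+it)=\int_0^{\infty}\frac{s}{t^2}\psi(s/t)\bigl[g(x+s)-g(x-s)\bigr]\,ds.
\]
Since $g$ is a strictly increasing homeomorphism of $\IR$, the integrand is nonnegative and positive on a set of positive Lebesgue measure, so $\mathrm{Im}\,u>0$ throughout $\IC^+$.

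The core step is the distortion estimate. Writing the Wirtinger derivatives
\[
u_z=\tfrac12\bigl((A_x+B_y)+i(B_x-A_y)\bigr),\qquad u_{\bar z}=\tfrac12\bigl((A_x-B_y)+i(B_x+A_y)\bigr),
\]
I would express each of $A_x,A_y,B_x,B_y$ as a convolution in the $x$-variable of $g$ against an explicit kernel of the form $t^{-k}\psi^{(\ell)}(\cdot/t)$, then reorganise by splitting $s>0$ versus $s<0$ and grouping the integrals into the symmetric and antisymmetric differences
\[
D_-(x,s):=g(x+s)-g(x-s),\qquad D_+(x,s):=g(x+s)+g(x-s)-2g(x).
\]
The parity of $\psi$ and its derivatives arranges things so that $u_z$ is essentially an integral of $D_-$ against a \emph{positive} Gaussian-type kernel (hence bounded below in modulus by a positive quantity), whereas $u_{\bar z}$ reduces to integrals involving $D_+$. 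Here quasisymmetry of $g$ is decisive: the definition $\rho^{-1}\le (g(x+s)-g(x))/(g(x)-g(x-s))\le \rho$ is equivalent to the pointwise bound $|D_+(x,s)|\le \tfrac{\rho-1}{\rho+1}D_-(x,s)$, which together with pointwise domination of one Gaussian weight by another yields $|u_{\bar z}|\le k(\rho)\,|u_z|$ with $k(\rho)<1$.

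Finally, since the Jacobian satisfies $|u_z|^2-|u_{\bar z}|^2\ge(1-k^2)|u_z|^2$ and $u_z$ does not vanish (for instance because $A_x=\psi_t*g'>0$ as a convolution of positive quantities when $g$ is strictly increasing), $u$ is a quasi-regular local homeomorphism of $\IC^+$ with continuous extension to $\overline{\IC^+}$ that agrees with $g$ on $\IR$. A topological degree/monodromy argument, or alternatively a Stoilow factorisation $u=h\circ\varphi$ with $\varphi:\IC^+\to\IC^+$ conformal and $h$ a homeomorphism, upgrades this to a global homeomorphism $\IC^+\to\IC^+$. The main obstacle in this plan is the Beltrami estimate: since $\psi$ is not compactly supported, unlike the original Beurling--Ahlfors kernel, one has to exploit Gaussian decay carefully in order to keep the reorganised integrals absolutely convergent and to ensure that the pointwise comparison between the ``$D_+$-part'' of $u_{\bar z}$ and the ``$D_-$-part'' of $u_z$ is uniform in $(x,t)$.
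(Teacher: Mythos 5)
The paper does not prove this lemma: it is imported as a black box from Fefferman--Kenig--Pipher \cite[Lemma 4.4]{fkp}, so there is no in-paper argument to compare your sketch against. Your plan is therefore being measured against the actual FKP proof, and there is a genuine gap precisely where you flag it -- the Beltrami estimate.

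The preparatory parts of your sketch are fine: the identity $|D_+(x,s)|\le\frac{\rho-1}{\rho+1}D_-(x,s)$ is indeed equivalent to $\rho$-quasisymmetry (set $a=g(x+s)-g(x)$, $b=g(x)-g(x-s)$ and observe $|a-b|/(a+b)$ is maximized on $[\rho^{-1},\rho]$ at the endpoints), the boundary convergence via the approximate identity is routine, and the positivity of $\mathrm{Im}\,u$ works once the sign convention in \eqref{eq:fkp-extension} is fixed (the displayed relation $-\psi'(x)=-x\psi(x)$ for the Gaussian is off by a sign, which you sensibly wave away). What does not carry over from Beurling--Ahlfors is the core reduction. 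If you actually compute the kernels, writing $u=A+iB$ with $A=\psi_t*g$, $B=-\psi'_t*g$, you find that $A_x$, $B_t$ are integrals of $D_-$ against kernels proportional to $\tfrac{s}{t^3}\psi(s/t)$ and $\tfrac{s}{t^3}\psi(s/t)\bigl(2-(s/t)^2\bigr)$ respectively, while $A_t=-B_x$ are integrals of $D_+$ against $\tfrac{1}{t^2}\psi(s/t)\bigl((s/t)^2-1\bigr)$. Then $2u_{\bar z}=(A_x-B_t)+i(B_x+A_t)$ has its imaginary part cancel identically, and $A_x-B_t=\int_0^\infty\tfrac{s}{t^3}\psi(s/t)\bigl((s/t)^2-1\bigr)D_-\,ds$ is an integral of $D_-$, not $D_+$, against a kernel that \emph{changes sign} at $s=t$. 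Likewise $A_x+B_t$ has the sign-changing kernel $\tfrac{s}{t^3}\psi(s/t)\bigl(3-(s/t)^2\bigr)$. So your central structural claim -- $u_z$ essentially paired with $D_-$ against a positive kernel, $u_{\bar z}$ essentially paired with $D_+$ -- is not what the computation produces, and the pointwise bound $|D_+|\le\lambda D_-$ never enters in the way you intend. Because the relevant kernels are not proportional and some are not single-signed, there is no pointwise domination between the ``$u_{\bar z}$-kernel'' and the ``$u_z$-kernel''; one is forced into an averaged comparison.

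This is exactly why the FKP argument is not a cosmetic modification of Beurling--Ahlfors. The genuine proof uses the doubling consequence of quasisymmetry to show that all four quantities $A_x,B_t,A_t,B_x$ are comparable, up to constants depending only on $\rho$, to the single characteristic quantity $\delta(x,t):=\bigl(g(x+t)-g(x-t)\bigr)/2t$, and then controls the off-diagonal combinations $A_x-B_t$ and $B_x+A_t$ by exploiting cancellation of the sign-changing Gaussian weights together with quasisymmetry at \emph{all} scales $s$, not just the pointwise inequality at a fixed scale. That step -- replacing a pointwise kernel comparison by a scale-uniform integral comparison -- is the missing idea in your sketch, and acknowledging it as ``the main obstacle'' does not discharge it. Separately, the lower bound on $|u_z|$ needs more than $A_x>0$: you must show $|u_z|\gtrsim\delta(x,t)$ uniformly, which again is an averaged estimate; positivity of one component is not enough to dominate $|u_{\bar z}|$.
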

We need one more auxiliary result:
\begin{lemma}\label{le:aux} Assume that $g:\IR\to\IR$ is Zygmund. Then the extension  \eqref{eq:fkp-extension} of $g$ satisfies
for alla $x\in\IR$ and $t>0$
\begin{eqnarray*}
|\Delta u(x+it)|&\leq& Ct^{-1},\qquad\textrm{and}\\
|\nabla u(x+it)|&\leq& C\max\big(1,\log(t^{-1})\big),
\end{eqnarray*}
where $C>0$ is a constant.
\end{lemma}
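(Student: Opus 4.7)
The plan is to estimate the relevant derivatives of the real and imaginary parts $U = \psi_t * g$ and $V = -\psi'_t * g$ directly, using the symmetries of the Gaussian kernel together with the Zygmund condition on $g$. I would first record two basic identities that follow from the dilation/heat-equation structure of the Gaussian,
\[
\partial_x\psi_t = t^{-1}\psi'_t \qquad\text{and}\qquad \partial_t\psi_t = t\,\partial_x^2\psi_t.
\]
The first gives the algebraic relation $V = -t\,\partial_x U$, and the second lets me eliminate all $t$-derivatives of $U$ in favor of $x$-derivatives; in particular $\Delta U = 2\partial_x^2 U + t^2\,\partial_x^4 U$. An analogous computation with $\partial_t\psi'_t = t\,\partial_x^2\psi'_t + t^{-1}\psi'_t$ yields $\Delta V = 4\partial_x^2 V + t^2\,\partial_x^4 V$.

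For the Laplacian bound I would use that $\partial_x^{2k}\psi_t$ is even in $x$ with zero mean, which allows the symmetrization
\[
\partial_x^{2k}U(x,t) \;=\; \tfrac12\int \partial_x^{2k}\psi_t(y)\bigl[g(x+y)+g(x-y)-2g(x)\bigr]\,dy.
\]
The Zygmund inequality $|g(x+y)+g(x-y)-2g(x)|\le C|y|$, combined with the rescaling $y = ts$, gives $|\partial_x^{2k}U|\le C_k\,t^{1-2k}$. Plugging in $k=1,2$ yields $|\Delta U|\le C/t$. The bound for $|\Delta V|$ is obtained in parallel, now additionally exploiting the moment vanishings $\int s\,\psi'''(s)\,ds = \int s^2\,\psi'''(s)\,ds = 0$ (which follow from the Hermite structure of Gaussian derivatives): these upgrade the effective order of cancellation of the odd kernels $\partial_x^{2k}\psi'_t$, so that after combining with a smoothing decomposition $g = \psi_t*g + (g - \psi_t*g)$ and the bound $\|g-\psi_t*g\|_\infty\le Ct$ (which itself follows from the Zygmund condition by the symmetrization trick above), the same $C/t$ bound is recovered for $|\Delta V|$.

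For the gradient bound I would split the four first-order derivatives of $U$ and $V$ into two groups. The \emph{diagonal} ones, $\partial_t U$ and $\partial_x V$, come from even kernels of mean zero and are handled directly by the second-difference Zygmund estimate, giving a uniform $O(1)$ bound. The \emph{off-diagonal} ones, $\partial_x U$ and $\partial_t V$, come from odd kernels and after symmetrization involve only the first difference $g(x+y)-g(x-y)$; here I would apply the classical Zygmund modulus of continuity $|g(x+y)-g(x-y)|\le C|y|\bigl(1+|\log|y||\bigr)$, and integration against the scaled kernel then produces exactly the announced $C\max(1,\log(1/t))$ factor.

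The main obstacle I anticipate is keeping the Laplacian estimate for $V$ strictly of order $1/t$, free of a stray $\log(1/t)$. A naive symmetrization against odd kernels only yields first differences of $g$ and so seems to force a $Ct^{-1}\log(1/t)$ bound; escaping this requires the additional moment cancellation described above, or equivalently rewriting $\Delta V = -4t\,\partial_x^3 U - t^3\partial_x^5 U$ via $V = -t\partial_x U$ and then comparing carefully against the mollification $\psi_t * g$. Once that step is carried out, assembling $|\Delta u|\le |\Delta U|+|\Delta V|\le C/t$ and $|\nabla u|\le C\max(1,\log(1/t))$ completes the proof.
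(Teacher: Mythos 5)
Your argument is correct but follows a genuinely different route from the paper. The paper's proof is kernel-agnostic: it shows $\|\partial_x^2(\varphi_t*g)\|_\infty = O(t^{-1})$ for \emph{any} $\varphi\in W^{2,1}(\IR)$ by invoking the dyadic decomposition $g=\sum_{j\ge 0}g_j$ with $\|g_j\|_\infty = O(2^{-j})$, $\|g''_j\|_\infty=O(2^j)$ (Stein's characterization of the Zygmund class), splitting at $j\sim\log(1/t)$; it then disposes of $t$-derivatives through the algebraic identity $\partial_t^2(\varphi_t*g)=\partial_x^2\big((\varphi_2)_t*g\big)$ with $\varphi_2(y)=y^2\varphi(y)$, and obtains the gradient bound from the extra decomposition fact $\|g'_j\|_\infty=O(1)$. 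You instead exploit that $\psi_t$ is the Gaussian: the heat relation $\partial_t\psi_t=t\,\partial_x^2\psi_t$ and the identity $V=-t\,\partial_x U$ let you express $\Delta U=2\partial_x^2U+t^2\partial_x^4U$ and $\Delta V=4\partial_x^2V+t^2\partial_x^4V$ purely in terms of $x$-derivatives of $U$; even-order derivatives are then controlled by the raw second-difference Zygmund bound via symmetrization of the even kernels $\partial_x^{2k}\psi_t$. You correctly identify the crux: $\Delta V$ involves \emph{odd} $x$-derivatives of $U$, whose symmetrization produces only a first difference and hence a spurious logarithm. Your proposed fix works, though it can be phrased more simply than you suggest: write $\partial_x^3 U=\partial_x^3\psi_t*g_t+\partial_x^3\psi_t*(g-g_t)$ with $g_t=\psi_t*g$; the second term is $O(t^{-3}\cdot t)=O(t^{-2})$ since $\|g-g_t\|_\infty\lesssim t$ by the Zygmund estimate, and the first equals $\partial_x\psi_t*(\partial_x^2g_t)$, giving $O(t^{-1}\cdot t^{-1})$ from your own bound $\|\partial_x^2 U\|_\infty\lesssim t^{-1}$ --- so the vanishing of $\int s^2\psi'''(s)\,ds$ that you invoke is not actually needed, only $\int\psi'''=\int s\,\psi'''=0$ implicitly through the mollification split. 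Both proofs are sound: the paper's is more robust (it never uses the Gaussian's special structure and cites the decomposition as a black box), while yours is more self-contained and conceptually transparent about where the cancellation comes from, at the cost of the delicate odd-derivative step.
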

\begin{proof}
Let us first observe that if $g$ is Zygmund, then for any $\varphi\in W^{2,1}(\IR)$ (i.e. $\varphi,\varphi''\in L^1(\IR)$) we have
\begin{equation}\label{eq:zygmundestimate}
\left\|\frac{d^2}{dx^2}\varphi_t*g\right\|_{L^\infty(\IR)}=O(t^{-1}),\quad \textrm{for all }\; t>0.
\end{equation}
We note that this follows easily from the mere definition of Zygmund functions if $\varphi$ is even, but for  general $\varphi$ we shall use the  fact that  $g$ can be decomposed as the sum $g=\sum_{j=0}^\infty g_j,$ where $\|g_j\|_{L^\infty(\IR)} =O(2^{-j})$ and $\|g''_j\|_{L^\infty(\IR)} =O(2^{j})$ for all $j\geq 0,$ see \cite[Corollary 1, p. 256]{stein1}. We may compute in two ways
\[
\frac{d^2}{dx^2}(\varphi_t*g(x))\; =\; \int_{-\infty}^{\infty}\varphi_t(x-y)g''(y)dy\;=\;t^{-2}\int_{-\infty}^{\infty}\varphi''_t(x-y)g(y)dy.
\]
By assuming first that $t\leq 1$ with  $t\sim 2^{-k}$ we apply the first formula above to the sum $g=\sum_{j=0}^k g_j,$ and the second one to the remainder
$g=\sum_{j=k+1}^\infty g_j. $ By noting that $\int_{-\infty}^{\infty}|\varphi_t(y)|dy=O(1)$ and $\int_{-\infty}^{\infty}|\varphi''_t(y)|dy=O(1)$, we obtain
\[
|\frac{d^2}{dx^2}(\varphi_t*g(x))| = O\Big(\sum_{j=1}^k2^j + t^{-2}\sum_{j=k+1}^\infty2^{-j} \Big)\; = \; O(t^{-1}),
\]
which proves \eqref{eq:zygmundestimate} for $t\in (0,1]$. If $t>1$ we simply apply the second formula directly on the bound $\|g\|_{L^\infty(\IR)} <\infty$ and obtain $\|\frac{d^2}{dx^2}(\varphi_t*g)\|_{L^\infty(\IR)}\leq O(t^{-2})=O(t^{-1})$ for $t>1$.

We then consider the Laplacian of the extension $u$ of $g$. Since $\psi, \psi'\in W^{2,1}(\IR),$ we obtain immediately from \eqref{eq:zygmundestimate} that $|\frac{d^2}{dx^2}u(x+it)|=O(t^{-1})$ uniformly in $x\in\IR.$ In turn, to consider differentiation with respect to $t$,
assume that $\phi:\IR\to\IR$ is smooth and  $(1+|t|^2)\phi(t)$ is integrable. Then
\begin{eqnarray*}
\frac{d}{dt}\varphi_t*g(x) & =&  \int_{-\infty}^{\infty}\Big(-t^{-2}\varphi_t(x-y)-t^{-3}(x-y)\varphi'_t(x-y)\Big)g(y)dy\\ &=& \int_{-\infty}^{\infty}g(y)\frac{d}{dy}\Big(t^{-2}(x-y)\varphi_t(x-y))\Big)dy\\
&=& -t^{-1} \int_{-\infty}^{\infty}\frac{(x-y)}{t}\varphi \Big(\frac{x-y}{t}\Big) g'(y)dy\\
&=& (\varphi_1)_t*g'(x) ,
\end{eqnarray*}
where $\varphi_1(y):=-y\varphi (y).$ An iteration gives, by denoting $\varphi_2(y):=y^2\varphi (y),$
\begin{equation}\label{eq:reduction}
\frac{d^2}{dt^2}\Big(\varphi_t*g(x)\Big) =(\varphi_2)_t*g''(x) = \frac{d^2}{dx^2}\Big((\varphi_2)_t*g(x)\Big).
\end{equation}
Since all the functions $t\psi(t), t^2\psi(t),t\psi(t), t^2\psi(t)$ and their second derivatives are integral, we may apply  \eqref{eq:reduction} and obtain as before the desired estimate for  $\frac{d^2}{dt^2}u(x+it).$

The stated estimate for $\nabla u$ is proven in a similar way. We use the fact that for in the decomposition  $g=\sum_{j=0}^\infty g_j,$ one may in addition demand that $\|g_j'\|_\infty\leq C $ for all $j\geq 1$ (see \cite[Formula (53), p. 254]{stein2}), which yields as before for $t\sim 2^{-k}<1$
\[
\left|\frac{d}{dx}(\varphi_t*g(x))\right| = O\Big(\sum_{j=1}^k  1+ t^{-1}\cdot \sum_{j=k+1}^\infty2^{-j} \Big)\; = \; O\big(\log(t^{-1})\big).
\]
The case $t\geq 1$ is trivial, and the case of the $t$-derivative is reduced to estimating the $x$-derivative as before.
\end{proof}

After these preparations it is now a simple matter to produce the desired example. Let $g_0$ be a singular Zygmund function which is constant outside $[-1,1] $ so that   Set $g(x)=x+g_0(x)$ for $x\in\IR.$ As $g_0$ is Zygmund, the function $g$ is quasi symmetric. Then its Fefferman-Kenig-Pipher extension $u:\IC^+\to\IC^+$ is quasiconformal with non-absolutely continuous boundary values over $[-1,1].$ Since the extension of the linear function $x\mapsto x$ is linear, we see that the Laplacian of $u$ equal that of the extension of $g_0$, and by the previous lemma we obtain the estimate
$$
|\Delta u(x+it)|\leq Ct^{-1}\qquad \textrm{for all}\;\; x+it\in \IC^+.
$$

Next, let $h:\ID\to\Omega'$ be conformal, where $\Omega'$ is a bounded and smooth Jordan domain that is contained in the upper half space $\IC^+$
and contains $[-2,2]$ as a boundary segment. Denote $\Omega=u(\Omega')$ so that $\Omega$ is smooth by our construction. Finally, pick a conformal map $\widetilde h:\Omega\to \ID$ and define
$f:=u\circ h$. Function $f$ satisfies all the requirements since the main terms in the formula for the Laplacian of $f$ (compare to \eqref{eq:explicite_enough}) are  $|\Delta u |$ and  $|\nabla u |^2$, and the previous lemma also yields  suitable bounds also for the gradient term.
\end{proof}

\begin{proof}[Proof of Corollary~\ref{co:smirnov}] The example for optimality constructed above obviously works also for the Corollary. In a similar vain,  the proof of the positive direction of  Theorem \ref{th:smirnov} also applies as such for the Corollary since in the proof we  used as a starting point the fact that $\Delta u\in L^p(\ID)$ for some $p>1.$
\end{proof}

\begin{remark}\label{rem:smirnov1}{\rm
There exists singular Zygmund functions on the real line such that $g(x+t)+g(x-t)-2g(x)=o(t)$ with quantitative little $o$ in the right hand side -- the derivatives of such functions are sometimes called Kahane measures. A possible decay of  the right hand side is $o(t\log^{-1/2}(1/t))$ for small $t$, but
one cannot decrease the power of $\log$ here. Using this kind of measures in our construction gives examples
with Laplacian growth $o(t^{-1}),$ where the little $o$ can be made explicit.

However, it is an open problem   whether Corollary \ref{co:smirnov} is true  for the exponent $p=1,$ as merely implementing the Kahane measures  described above appears not to give enough extra decay for the Laplacian.
}\end{remark}

\end{document}